\documentclass[12pt,reqno]{amsart}
%
%
\usepackage{,amsmath,amsthm,amssymb,euscript}%
%

\pagestyle{plain}
\topmargin=-0.5in
\textheight=9in
\evensidemargin=0in
\oddsidemargin=0in
\setlength{\textwidth}{6.5in}

\newtheorem{theorem}{Theorem}
\newtheorem{prop}[theorem]{Proposition}
\newtheorem{lemma}[theorem]{Lemma}
\theoremstyle{definition}
\newtheorem*{remark}{Remark}


\newcommand{\F}{{\EuScript F}}
\newcommand{\G}{{\EuScript G}}
\renewcommand{\H}{{\EuScript H}}
\newcommand{\V}{\EuScript V}
\newcommand{\PP}{\mathbb P}
\renewcommand{\P}{\mathbb P}
\newcommand{\R}{\mathbb R}
\newcommand{\C}{\mathbb C}
\newcommand{\CP}{\C\PP}
\newcommand{\CH}{\C{\mathrm H}}

\newcommand{\CC}{{\EuScript C}}
\newcommand{\I}{{\mathcal I}}
\newcommand{\K}{{\mathcal K}}
\newcommand{\Khat}{\widehat{\K}}
\newcommand{\M}{{\mathcal M}}

\newcommand{\w}{\omega}
\newcommand{\khat}{\hat\kappa}

\newcommand{\fhat}{\hat{f}}

\newcommand{\setU}{U}
\newcommand{\Uhat}{\widehat{\setU}}
\newcommand{\Bbar}{\overline{B}}
\newcommand{\pbar}{\bar{p}}
\newcommand{\qbar}{\bar{q}}


\newcommand{\bv}{\mathbf v}
\newcommand{\bg}{\mathbf g}
\newcommand{\bn}{\mathbf n}
\newcommand{\bz}{\mathbf z}

\def\vz{{\sf z}}
\def\vw{{\sf w}}
\def\e{{\sf e}}
\def\ehat{\hat{\e}}
\def\emp{\epsilon}

\newcommand{\ri}{\mathrm i}
\newcommand{\realpart}{\operatorname{Re}}
\newcommand{\impart}{\operatorname{Im}}

\newcommand{\JJ}{{\mathrm J}} 
\newcommand{\di}{\partial}

\newcommand{\restr}{\negthickspace \mid}

\newcommand{\sfP}{{\sf P}}

\newcommand{\intprod}{\mathbin{\raisebox{.4ex}{\hbox{\vrule height .5pt width
5pt depth 0pt %
         \vrule height 3pt width .5pt depth 0pt}}}}

\renewcommand\&{\wedge}

\newcommand{\jbar}{{\overline{\jmath}}}

\begin{document}
\title{A d'Alembert Formula for Hopf Hypersurfaces}
\author{Thomas A. Ivey}
\date{\today}
\begin{abstract}
A Hopf hypersurface in complex hyperbolic space $\CH^n$ is one
for which the complex structure applied to the normal vector is a principal
direction at each point.  In this paper, Hopf hypersurfaces for which the
corresponding principal curvature
is small (relative to ambient curvature) are studied by means of a generalized Gauss map into
a product of spheres, and it is shown that the hypersurface
may be recovered from the image of this map, via an explicit
parametrization.
\end{abstract}
\keywords{Hopf hypersurfaces}
\maketitle

\section*{Introduction}
A Hopf hypersurface is a real codimension-one submanifold $M$ in a complex
space form $\CP^n$ or $\CH^n$ which has the property that the
{\em structure directions} are principal directions at each point.
For any hypersurface in these spaces, a tangent vector $W$ is a structure
direction if $\JJ W$ is normal to the hypersurface, where $\JJ$ is
the complex structure.  If we assume that $M$ is oriented,
then we may globally define a structure vector field $W$ on
$M$ by requiring that $\JJ W$ be the unit normal vector field.

On a Hopf hypersurface, the principal curvature $\alpha$ of the structure direction
must be locally constant.  (This is due to Maeda \cite{maeda} for $\CP^n$ and
Ki-Suh \cite{KiSuh} for $\CH^n$.)
Furthermore, Cecil and Ryan \cite{cecilryan}
showed that every Hopf hypersurface in $\CP^n$ is locally congruent to a tube
over a holomorphic submanifold.  The radius $R$ of the tube is related to $\alpha$
by the formula $\alpha = (2/r)\cot(2R/r)$, where $4/r^2$ is the ambient holomorphic
sectional curvature.
Similarly, Montiel \cite{montiel} showed that Hopf hypersurfaces in $\CH^n$ (with
holomorphic sectional curvature $-4/r^2$) are tubes of radius $R$ over
holomorphic submanifolds, with $\alpha = (2/r)\coth(2R/r)$.  Notice
that $|\alpha| >2/r$ for these tubes;
a general construction for
Hopf hypersurfaces in $\CH^n$ with `small' $\alpha$ (i.e., $|\alpha|<2/r$) has been lacking, until now.

The Hopf condition for hypersurfaces in $\CH^n$, when regarded as a system of PDE,
changes type depending on $\alpha$:  it is elliptic
for `large' $\alpha$ (i.e., $|\alpha|>2/r$) and hyperbolic for small $\alpha$.
(In the elliptic case, the system is equivalent to the Cauchy-Riemann
equations for the holomorphic submanifolds which generate Montiel's tubes.)
The title of this paper is inspired by the fact that, like the standard
wave equation $u_{xx}=u_{tt}$, the system for Hopf hypersurfaces with small $\alpha$ is
integrable by the {\em method of Darboux} (see, e.g., Chapter 6 in \cite{cfb}).
For the wave equation, this integrability leads directly to d'Alembert's formula
$u(x,t)=f(x+t)+g(x-t)$ for the general solution, while for Hopf hypersurfaces we can
similarly construct the general solution in terms of arbitrary functional data.
Indeed, a similar approach has been recently taken by
Aledo, G\'alvez and Mira \cite{AGM} for flat surfaces in $S^3$.

In an earlier paper \cite{IR1}, Ryan and I showed that a Hopf hypersurface with small
$\alpha$ in $\CH^2$ determines, via a kind of generalized Gauss map, a pair of
contact curves in $S^3$.  We also showed that, conversely, the hypersurface may be constructed
(without integration) from an arbitrary pair of contact curves.  The main result of
this paper (see Theorem \ref{mainthm} below) is a straightforward generalization
of this construction to Hopf hypersurfaces in $\CH^n$, with
the contact curves being replaced by Legendrian submanifolds in $S^{2n-1}$.
(In all cases, the contact structure on the sphere is the standard one, with contact
planes perpendicular to the fibers of the Hopf fibration $S^{2n-1} \to \CP^{n-1}$.)
We also parametrize the Hopf hypersurfaces in $\CH^n$ arising from
this construction in terms of two given Legendrian submanifolds.  It is this
parametrization \eqref{dalembert} that we refer to
as a d'Alembert formula for Hopf hypersurfaces.

The main vehicle for producing results is the technique of moving frames,
and the use of exterior differential systems defined on various frame bundles.
We put this machinery in place by discussing moving
frames in complex space forms in \S1, and examining the exterior differential system for Hopf hypersurfaces in \S2.  The space $\CH^n$ inherits its geometry as the quotient
of the anti-de Sitter space $Q \subset \C^{n+1}$ by a circle action,
and the relationship between moving frames on $\CH^n$ and those on $Q$
is laid out in \S3.  This is used to prove the main result, and
also utilized in \S4 to obtain the d'Alembert formula.

\section{Moving Frames for Complex Space Forms}
Let $X$ be a simply-connected complex space form of dimension $n$, with constant holomorphic sectional curvature $4c$.
Let $\F$ be the unitary frame bundle of $X$, with fibration $\rho:\F\to X$.  Sections of this bundle are orthonormal frames
$e_1, e_2, \ldots, e_{2n}$ which satisfy the relations
$$\JJ e_{2k-1} = e_{2k}, \qquad 1\le k\le n.$$
Thus, with respect to this frame, the complex structure on $X$ is represented by the
matrix
\begin{equation}\label{defofJmatrix}
J = \begin{bmatrix} \begin{smallmatrix} 0 & -1 \\ 1 & 0 \end{smallmatrix} & & & \\
& \begin{smallmatrix} 0 & -1 \\ 1 & 0 \end{smallmatrix} & & \\
& & \ddots & \end{bmatrix}.
\end{equation}
The bundle $\F$ inherits, via pullback from the full orthogonal frame bundle,
canonical forms $\w^i$ and connection forms $\w^i_j$, where $1\le i,j \le 2n$.
These satisfy the structure equations
\begin{align}
d\w^i &= -\w^i_j \& \w^j,\\
d\w^i_j &= -\w^i_k \& \w^k_j + \Phi^i_j, \label{defofCurvature}
\end{align}
with curvature 2-forms given by
\begin{equation}\label{curvaturevalues}
\Phi^i_j = c(J^i_k \w^k \& J^j_\ell \w^\ell + \w^i \& \w^j - 2 J^i_j \Omega),
\end{equation}
where
$$\Omega := \w^1 \& \w^2 + \w^3 \& \w^4 + \ldots +\w^{2n-1} \& \w^{2n}$$
and we use summation convention for repeated indices from now on.
(As explained in \S\ref{Qstructure} below, the curvature may be
calculated using the fact that $X$ is a quotient of a quadric $Q \subset \C^{n+1}$
by an isometric circle action.)
Regarded as components of matrices of size $2n$,
the connection and curvature forms take value in $\mathfrak u(n) \subset \mathfrak{so}(2n)$,
which may be characterized as the subalgebra of matrices commuting with $J$.  In particular,
we note that
\begin{equation}\label{connrel}
\w^i_k J^k_j = J^i_k \w^k_j.
\end{equation}

Suppose that $(e_1, \ldots, e_{2n})$ is a unitary framing defined along a real hypersurface
$M \subset X$, adapted so that $e_{2n}$ is the hypersurface normal, and thus
$e_{2n-1}$ equals the structure vector $W$.  This framing defines a section
$f:M \to \F$ that satisfies $f^* \w^{2n}=0$ and
$$f^* \w^{2n}_j = A_{jk} \w^k, \qquad 1\le j,k \le 2n-1,$$
where matrix $A$ represents the shape operator of $M$
with respect to the basis for $TM$ provided by $(e_1, \ldots, e_{2n-1})$.
In what follows, we will identify the framing with the corresponding submanifold $f(M)\subset \F$.

In particular, an adapted framing for a Hopf hypersurface with principal
curvature equal to a given constant $\alpha$ will be an integral submanifold of the differential forms
\begin{align*}\theta_0 &:= \w^{2n},\\
\theta_1 &:= \w^{2n}_{2n-1} -\alpha \w^{2n-1}.
\end{align*}
(In other words, these differential forms pull back to vanish on the submanifold $f(M)$.)
Conversely, any $(2n-1)$-dimensional submanifold $\Sigma \subset \F$ that is an integral of $\theta_0, \theta_1$
will be generated by an adapted framing along a Hopf hypersurface $M \subset X$, provided that
$\Sigma$ satisfies the independence condition
\begin{equation}\label{indcond}\w^1 \& \w^2\& \ldots \& \w^{2n-1} \restr_\Sigma \ne 0.
\end{equation}

Let $\I$ be the Pfaffian system on $\F$ generated (as a differential ideal) by $\theta_0$ and $\theta_1$.
We will later develop methods for constructing integral submanifolds of $\I$ (when $X=\CH^n$,
for small values of $\alpha$) which are based on a close examination of the structure of $\I$
as an algebraic ideal.  Thus, we need the algebraic generators of $\I$.  We compute
\begin{align*}
d\theta_0 &\equiv -\w^{2n}_j \& \w^j,\\
d\theta_1 &\equiv \w^{2n}_j \& \w^{2n}_k J^k_j + \alpha \w^{2n}_k J^k_j \&\w^j + \Phi^{2n}_{2n-1},
\end{align*}
modulo $\theta_0, \theta_1$, where $1\le j,k \le 2n-2$.
(In simplifying the second equation, we use the relation
$\w^{2n-1}_j = \w^{2n}_k J^k_j$, which is a consequence of \eqref{connrel}.)
For convenience, introduce the notation
$$\psi_j := \w^{2n}_j, \qquad 1\le j \le 2n-2.$$
Then, using these abbreviations and the formula for the curvature 2-forms,
 the  generator 2-forms of $\I$ can be expressed as
\begin{align*}\Theta_0 &:= -\psi_j \& \w^j,\\
\Theta_1 &:= (\psi_j - \alpha \w^j) \& \psi_k J^k_j - 2c\, \Omega
\end{align*}
Note that, modulo the 1-forms of $\I$, $\Omega$ is congruent to
$$\Omega' := \w^1 \& \w^2 + \ldots + \w^{2n-3} \& \w^{2n-2}.$$

One can show that, for any $n\ge 2$, the exterior differential system $\I$ is
involutive, with nonzero Cartan characters $s_1=s_2=\ldots=s_{n-1}=2$
(see, e.g., \cite{cfb} for definitions).
Thus, Hopf hypersurfaces in $X$ depend locally on specifying 2 real functions
of $n-1$ real variables.  Our aim in the next section is to give a concrete
(and global) realization of this abstract count, in the hyperbolic case.

\section{Hopf Hypersurfaces with Real Characteristics}\label{hypersurfaces}
From now on assume that $X=\CH^n$ and let $c=-1/r^2$.  Moreover, assume
that $|\alpha| < 2/r$ and let $\phi \in (-\pi/2, \pi/2)$ be the unique
angle such that
$$\alpha = (2/r) \sin\phi.$$
Our aim in this section is to define a pair of characteristic foliations for
integral submanifolds of $\I$, and to show how to construct
the Hopf hypersurface $M$ in terms of data associated to the
characteristics.

Notice that the above 2-forms $\Theta_0,\Theta_1$ are each expressed as a sum of $2n-2$ wedge products of pairs of 1-forms.
By linearly combining $\Theta_0$ and $\Theta_1$, we can produce equivalent generators which are much lower in rank, namely
\begin{multline*}\Theta_1 \pm \tfrac2r \cos\phi\, \Theta_0 = \\
2\sum_{i=1}^{n-1} (\psi_{2i-1} - \tfrac1r (\sin\phi\, \w^{2i-1} \pm \cos\phi\, \w^{2i})) \&
(\psi_{2i} -\tfrac1r(\sin\phi\, \w^{2i} \mp \cos\phi\, \w^{2i-1})).
\end{multline*}
The factors in these $n-1$ wedge products will have special significance in what follows, so we will introduce the
notation
$$\kappa^\pm_j := \psi_j - \tfrac1r(\sin\phi\, \w^j \pm \cos\phi\, \w^\jbar),\qquad 1\le j \le 2n-2,$$
where
$$ \w^\jbar = \left\{ \begin{aligned} \w^{j+1}, &\text{ for $j$ odd}, \\ -\w^{j-1} &\text{ for $j$ even.}
\end{aligned}\right.
$$
Then we may take $\displaystyle\sum_{i=1}^{n-1} \kappa^+_{2i-1} \& \kappa^+_{2i}$ and
$\displaystyle\sum_{i=1}^{n-1} \kappa^-_{2i-1} \& \kappa^-_{2i}$ as generator 2-forms for $\I$, in place of $\Theta_0$ and $\Theta_1$.

We define two Pfaffian systems of rank $2n$ on $\F$, given by
$$\M^\pm = \{ \theta_0,\theta_1, \kappa^\pm_1, \ldots, \kappa^\pm_{2n-2}\}.$$
These are the {\em characteristic systems} of $\I$, so called because
a $2n-2$-dimensional integral hyperplane $E \subset T_u \F$ belongs to the characteristic variety
of $\I$ (see \S4.6 in \cite{cfb})
if and only if either the 1-forms $\kappa_j^+$ or the 1-forms $\kappa_j^-$
restrict to be linearly dependent on $E$.  (If $|\alpha|>2/r$, or $X=\CP^n$, the
complexified characteristic variety of $\I$ has no real points, and thus the system $\I$ is elliptic.)

Within each characteristic system, there is a smaller system which
is {\em completely integrable}, i.e., it satisfies the Frobenius integrability
condition.  If we define
$$\kappa^\pm_0 := \theta_1 \mp \tfrac2r \cos\phi\, \theta_0 = \w^{2n}_{2n-1} - \tfrac2r(
\sin\phi\w^{2n-1}\pm\cos\phi\w^{2n}),$$
then the systems
$$\K^\pm = \{\kappa^\pm_0, \kappa^\pm_1, \ldots, \kappa^\pm_{2n-2} \}$$
are each completely integrable Pfaffian systems on $\F$.
Indeed,
\begin{equation}\label{Dcontact}
d\kappa^\pm_0 \equiv 2\sum_{i=1}^{n-1} \kappa^\pm_{2i-1} \& \kappa^\pm_{2i}\quad \mod \kappa^\pm_0,
\end{equation}
while
$$d\kappa^+_j = -\kappa^\pm_k \& \w^k_j + \w^{2n}_\jbar \& \kappa^\pm_0
+\tfrac1r(\sin\phi\, \w^{2n} \mp\cos\phi\, \w^{2n-1})\& \kappa^\pm_j -(\sin\phi\,\w^{2n-1}\pm\cos\phi\,\w^{2n})\& \kappa^\pm_\jbar.$$
The systems $\K^+$ and $\K^-$ have rank $2n-1$; thus, applying the Frobenius Theorem shows that $\F$ is foliated by codimension-$(2n-1)$ integrals of each system.
Let $L^+$ and $L^-$ be the leaf spaces for each foliation and let $q^\pm : \F \to L^\pm$
take a point in $\F$ to the unique maximal leaf through that point.  Then we have the following result,
to be proven in \S\ref{putoff}:

\begin{prop}\label{NisSphere} The leaf spaces $L^\pm$ can be identified with the standard
sphere $S^{2n-1}$ in a way such that the maps $q^\pm:\F \to S^{2n-1}$ are smooth submersions.
\end{prop}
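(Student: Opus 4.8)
The plan is to realize each quotient map $q^\pm$ as an explicit map into the ideal boundary of $\CH^n$, exploiting the homogeneity of the whole picture. Recall from the dimension count of \S1 that $\dim\F=n^2+2n$ equals the dimension of the (identity component of the) isometry group $G$ of $\CH^n$; using the description in \S\ref{Qstructure} of $\CH^n$ as the circle quotient of the quadric $Q\subset\C^{n+1}$, one may identify $\F$ with $G$, so that the forms $\w^i$ and $\w^i_j$ become the components of the left-invariant Maurer--Cartan form. Since each $\kappa^\pm_j$ (for $0\le j\le 2n-2$) is a constant-coefficient combination of Maurer--Cartan forms, the rank-$(2n-1)$ system $\K^\pm$ is left-invariant. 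It is completely integrable by the computation preceding the proposition, so its annihilator $\mathfrak h^\pm\subset\mathfrak g$ is a Lie subalgebra of dimension $\dim\mathfrak g-(2n-1)=n^2+1$, and the leaves of $\K^\pm$ are precisely the cosets of the connected subgroup $H^\pm\subset G$ with Lie algebra $\mathfrak h^\pm$. Thus $L^\pm$ is the homogeneous space $G/H^\pm$ and $q^\pm$ is the canonical coset map.

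The heart of the matter is to identify $H^\pm$ concretely. First I would pass to the quadric model of \S\ref{Qstructure} and lift an adapted framing to a unitary frame $(Z;Z_1,\dots,Z_n)$ of $\C^{n+1}$, with $Z\in Q$ the position vector and $Z_n$ spanning the complex line carrying the structure vector and the normal. Out of $Z$ and $Z_n$ I would form an explicit null vector $\Lambda^\pm\in\C^{n+1}$, with coefficients involving $\sin\phi$ and $\cos\phi$ and normalized so that $\langle\Lambda^\pm,\Lambda^\pm\rangle=0$, and then check, using the structure equations together with $\psi_j=\w^{2n}_j$ and $\w^{2n-1}_j=\w^{2n}_k J^k_j$, that
\begin{equation*}
d\Lambda^\pm\equiv(\text{scalar }1\text{-form})\,\Lambda^\pm\pmod{\kappa^\pm_0,\dots,\kappa^\pm_{2n-2}}.
\end{equation*}
This says the null line $[\Lambda^\pm]$ is constant along the leaves of $\K^\pm$, so $q^\pm$ factors through $u\mapsto[\Lambda^\pm(u)]$ into the projectivized null cone. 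A complementary rank count --- verifying that the pullbacks of the standard $1$-forms on the target span exactly $\kappa^\pm_0,\dots,\kappa^\pm_{2n-2}$ --- shows conversely that the fibers of this map are the full leaves, so that $H^\pm$ is exactly the stabilizer in $G$ of a null line, a maximal parabolic subgroup of the expected dimension $n^2+1$.

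Finally I would recognize the target as the standard sphere. The space of null lines for a Hermitian form of signature $(n,1)$ is the ideal boundary $\partial_\infty\CH^n$, diffeomorphic to $S^{2n-1}$ and carrying the standard contact structure whose planes are orthogonal to the fibers of the Hopf fibration $S^{2n-1}\to\CP^{n-1}$; since $G$ acts transitively on it with isotropy $H^\pm$, we obtain $G/H^\pm\cong S^{2n-1}$, with $q^\pm$ a surjective submersion. The step I expect to be the main obstacle is precisely the identification of $\mathfrak h^\pm$ with a null-line stabilizer: local Frobenius integrability only guarantees that the leaf space is \emph{locally} a $(2n-1)$-manifold, and it is the closedness of the parabolic $H^\pm$ --- equivalently, that the leaves are closed rather than dense --- that makes $L^\pm$ globally the sphere rather than a non-Hausdorff quotient. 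Establishing this rests on the explicit null-vector computation in the quadric model, which is why the proof is deferred to \S\ref{putoff}.
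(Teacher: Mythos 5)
Your proposal is correct and takes essentially the same approach as the paper: the paper likewise exploits left-invariance by identifying the (lifted) frame bundle with a matrix group ($\G\cong U(1,n)$ via the quadric $Q$), concludes that the leaves of $\K^\pm$ are cosets of connected subgroups $\H^\pm$, and identifies the leaf space with the projectivized null cone $\cong S^{2n-1}$ through the explicit null vector $\bg^\pm = \ehat_0 - (\sin\phi\, \ehat_{2n-1} \pm \cos\phi\, \ehat_{2n})$, whose null line is shown by differentiation to be constant precisely along the leaves. Your $\Lambda^\pm$ is exactly the paper's $\bg^\pm$, and the fiber-equals-leaf (closedness) subtlety you flag is resolved in the paper by observing that the isotropy subgroup of a null line is connected, so each connected fiber is a single coset of $\H^\pm$.
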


Now let $\Sigma^{2n-1}\subset \F$ be an integral manifold of $\I$ satisfying the independence condition.
Then clearly the 1-forms $\kappa^\pm_0$ pull back to be zero on $\Sigma$, as do the 2-forms
given by the right-hand side of \eqref{Dcontact}.  This implies that the characteristic
systems $\K^\pm$ each restrict to $\Sigma$ to be of rank at most $n-1$.

\begin{lemma}The restrictions of the system $\K^\pm$ to $\Sigma$ have rank exactly $n-1$ at each point.
\end{lemma}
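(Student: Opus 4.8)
The plan is to convert the statement into a linear-algebra fact about the shape operator restricted to the holomorphic distribution $\mathcal D = \{e_1,\dots,e_{2n-2}\}$, and then to play the two characteristic systems against one another. First I would use the independence condition: since $\dim\Sigma = 2n-1$ and $\w^1\&\dots\&\w^{2n-1}\restr_\Sigma\ne 0$, the forms $\w^1,\dots,\w^{2n-1}$ are a coframe on $\Sigma$. On $\Sigma$ the adapted framing gives $\psi_j = \w^{2n}_j = A_{jk}\w^k$, and because $\theta_0=\theta_1=0$ force $e_{2n-1}=W$ to be principal (so $A_{j,2n-1}=\alpha\,\delta_{j,2n-1}$), the forms $\psi_j$ with $1\le j\le 2n-2$ involve only $\w^1,\dots,\w^{2n-2}$. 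Hence on $\Sigma$
\[
\kappa^\pm_j = \sum_{k=1}^{2n-2} B^\pm_{jk}\,\w^k,\qquad B^\pm = S \pm \tfrac1r\cos\phi\,J,
\]
where $S = A - \tfrac12\alpha I$ is symmetric and $J$ is the restricted complex structure matrix. As $\w^1,\dots,\w^{2n-2}$ are independent on $\Sigma$, the rank of $\K^\pm\restr_\Sigma$ equals the rank of the $(2n-2)\times(2n-2)$ matrix $B^\pm$.

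The key algebraic input is the quadratic identity satisfied by $A$. Expanding $\Theta_1=0$ on $\Sigma$ (equivalently, invoking the Ki-Suh relation \cite{KiSuh} for Hopf hypersurfaces) yields $AJA = \tfrac12\alpha(AJ+JA)+cJ$ on $\mathcal D$; substituting $A = S+\tfrac12\alpha I$ together with $\alpha=(2/r)\sin\phi$ and $c=-1/r^2$ collapses this to the clean relation $SJS = -\tfrac1{r^2}\cos^2\phi\,J$. Since $\phi\in(-\pi/2,\pi/2)$ gives $\cos\phi\ne 0$, the right-hand side is invertible, so $S$ is invertible. Setting $P = \tfrac1r\cos\phi\,S^{-1}J$ and rewriting the identity as $S^{-1}JS^{-1} = -(r^2/\cos^2\phi)J$, one computes directly that $P^2 = I$. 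Thus $P$ is an involution of $\mathcal D$, and $\mathcal D$ splits as $E_+\oplus E_-$ into its $(\pm1)$-eigenspaces, with $\dim E_+ + \dim E_- = 2n-2$.

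Finally I would combine this splitting with the rank bound already established. Writing $B^\pm = S(I\pm P)$ with $S$ invertible gives $\operatorname{rank}B^+ = (2n-2)-\dim E_-$ and $\operatorname{rank}B^- = (2n-2)-\dim E_+$, so that $\operatorname{rank}B^+ + \operatorname{rank}B^- = 2(n-1)$. The preceding discussion in the text shows each of these ranks is at most $n-1$; since they sum to $2(n-1)$, each must equal $n-1$ exactly, which is the assertion of the lemma. The mechanism worth emphasizing is that the lower bound for $\K^+$ is forced by the upper bound for $\K^-$, and conversely. The main obstacle is establishing the identity $SJS = -\tfrac1{r^2}\cos^2\phi\,J$ in a self-contained way from $\Theta_1\restr_\Sigma = 0$: this is the one genuine index computation, and it is precisely the curvature term $-2c\,\Omega$ that makes the scalar $\tfrac14\alpha^2 + c = -\tfrac1{r^2}\cos^2\phi$ nonzero, hence guarantees both the invertibility of $S$ and the real eigenspace decomposition on which the count depends.
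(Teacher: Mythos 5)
Your proof is correct, but the engine driving it is genuinely different from the paper's. Both arguments set up identically: on $\Sigma$ the independence condition and the vanishing of $\theta_0,\theta_1,\Theta_0$ give $\kappa^\pm_j = B^\pm_{jk}\w^k$ with $B^\pm = S \pm \tfrac{\cos\phi}{r}J$, where $S = A - \tfrac{\alpha}{2}I$ is symmetric and $J$ is the restricted complex structure; and both close by playing a complementary bound against the upper bound $\operatorname{rank}(\K^\pm\restr_\Sigma)\le n-1$ established just before the lemma. The difference is the middle step. The paper uses no curvature identity at all: since $S$ is symmetric and $J$ is skew, $B^+$ and $B^-$ are transposes of one another (so their ranks agree), and since $B^+ - B^- = \tfrac{2\cos\phi}{r}J$ is invertible, the rows of $B^+$ and $B^-$ together span all of $\R^{2n-2}$, so the ranks sum to at least $2n-2$; only the vanishing of $\Theta_0$ enters. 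You instead invoke the vanishing of $\Theta_1$, extract the Ki--Suh relation $AJA = \tfrac{\alpha}{2}(AJ+JA)+cJ$, rewrite it as $SJS = -\tfrac{\cos^2\phi}{r^2}J$, and read off the ranks exactly from the involution $P = \tfrac{\cos\phi}{r}S^{-1}J$; I have checked this identity and all the subsequent algebra ($P^2 = I$, $B^\pm = S(I\pm P)$, $\operatorname{rank}B^\pm = \dim E_\pm$), and it is sound. What the paper's route buys is economy and softness: three lines, no Codazzi-type computation. What your route costs is the ``one genuine index computation'' you flag (note that expanding $\Theta_1\restr_\Sigma = 0$ into your quadratic identity also uses the symmetry of $A$, i.e., $\Theta_0\restr_\Sigma=0$, as does your claim that $A_{j,2n-1}=0$); what it buys is strictly more information: it proves $S$ is invertible (so $\alpha/2$ is never an eigenvalue of the shape operator on the holomorphic distribution when $|\alpha|<2/r$), and it identifies the kernels of $B^\pm$ --- hence the tangent directions to the two characteristic foliations of $\Sigma$ --- as the eigenspaces $E_\mp$ of the explicit operator $P$ built from the shape operator. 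In fact, your argument can be made independent of the text's upper bound: since $S^{-1}$ is symmetric and $J$ is skew, $\operatorname{tr}P = 0$, and for an involution this forces $\dim E_+ = \dim E_- = n-1$ outright.
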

\begin{proof}Suppose that $\kappa^\pm = T^\pm \omega$ when restricted to $\Sigma$, where we define
vector-valued 1-forms $\omega = (\w^1,\ldots, \w^{2n-2})$
and $\kappa^\pm$ similarly.  Then the vanishing of $\Theta_0$ on $\Sigma$ implies that $T^+$ and $T^-$
are transposes of one another.  However, because the span on $\F$ of $\{\kappa^+_j, \kappa^-_j\}$
is the same as $\{ \psi_j, \omega^j\}$, the ranks of $T^+$ and $T^-$ must add up to $2n-2$.
Hence, each has rank $n-1$.
\end{proof}

It follows that $\Sigma$ is foliated in two ways, by $n$-dimensional
integral manifolds of $\K^+$ and $\K^-$, and the restrictions to $\Sigma$ of the
maps $q^+$ and $q^-$ each have rank $n-1$.

The equation \eqref{Dcontact} implies that on each leaf space $L^\pm$ there is a contact distribution
whose annihilator 1-forms pull back via $q^\pm$ to be multiples of $\kappa^\pm_0$.
(For a general
criterion for the existence of well-defined exterior differential systems on quotient
manifolds, given in terms of semibasic 1-forms on the total space, see Prop. 6.1.19 in \cite{cfb}.)
Hence, the
images of $\Sigma$ under $q^+$ and $q^-$ are $(n-1)$-dimensional immersed contact manifolds in the leaf spaces.
(In \S\ref{putoff} we will give a geometric interpretation of the contact condition in these spaces.)
It is this `data', the images $q^\pm(\Sigma)$, which we will use to recover the Hopf hypersurface.
For simplicity, we consider here only the case where the contact manifolds are embedded.

\begin{theorem}\label{constructHopf}\label{mainthm}
  Let $N_1 \subset L^+$ and $N_2 \subset L^-$ be Legendrian submanifolds of dimension $n-1$,
and let $R \subset \F$ be the intersection of their inverse images under $q^+$ and $q^-$ respectively.
Then $R$ is a smooth $n^2$-dimensional integral manifold of $\I$.  Furthermore,
if $U$ is an open subset of $R$ on which the independence condition holds, then $\rho(U)\subset \CH^n$
is an immersed Hopf hypersurface with Hopf principal curvature $\alpha$.
\end{theorem}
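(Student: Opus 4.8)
The plan is to prove the three assertions in turn: that $R$ is smooth of dimension $n^2$, that it is an integral manifold of $\I$, and finally that $\rho(U)$ is a Hopf hypersurface. I would begin with smoothness and dimension, realizing each preimage as a transverse intersection. Since $q^\pm$ is a submersion and $N_i$ has codimension $n$ in $S^{2n-1}$, each of $(q^+)^{-1}(N_1)$ and $(q^-)^{-1}(N_2)$ is a smooth codimension-$n$ submanifold of $\F$, cut out by the $q^\pm$-pullbacks of the conormal $1$-forms of $N_1$, resp.\ $N_2$. Because $N_1$ is Legendrian, its conormal space is spanned by the contact form $\eta^+$ on $L^+$ (which pulls back to a multiple of $\kappa^+_0$) together with $n-1$ independent forms that annihilate $TN_1$ inside the contact distribution; these pull back to $n-1$ independent combinations of $\kappa^+_1,\ldots,\kappa^+_{2n-2}$, and symmetrically for $N_2$ with superscript $-$. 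Transversality of the two preimages then reduces to linear independence of these $2n$ conormal forms. This is immediate from the observation used in the proof of the Lemma: $\kappa^+_j+\kappa^-_j$ and $\kappa^+_j-\kappa^-_j$ recover $\psi_j$ and $\w^\jbar$ (here $\cos\phi\ne0$ is essential), so $\mathrm{span}\{\kappa^+_j\}$ and $\mathrm{span}\{\kappa^-_j\}$ are complementary $(2n-2)$-planes, each independent of $\kappa^\pm_0$ (which involve only $\w^{2n}$, $\w^{2n-1}$ and $\w^{2n}_{2n-1}$). Hence the $2n$ forms are independent at each point, the intersection is clean, and $\dim R=\dim\F-2n=n^2$.

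Next I would show that $\theta_0,\theta_1$ and the degree-two generators $\Theta^\pm:=\sum_{i=1}^{n-1}\kappa^\pm_{2i-1}\&\kappa^\pm_{2i}$ all restrict to zero on $R$. On $R$ both $\kappa^+_0$ and $\kappa^-_0$ pull back to zero, since they are $q^\pm$-pullbacks of the contact forms and these vanish on the integral submanifolds $N_i$; as $\theta_0$ and $\theta_1$ are linear combinations of $\kappa^+_0,\kappa^-_0$, both $\theta_0$ and $\theta_1$ vanish on $R$. For the $2$-form generators I would use \eqref{Dcontact}: each $\Theta^\pm$ is semibasic for $q^\pm$ and congruent to $\tfrac12 d\kappa^\pm_0$ modulo $\kappa^\pm_0$, hence descends to a multiple of the symplectic form $d\eta^\pm$ on the contact distribution of $L^\pm$. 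Since $N_i$ is Legendrian, $d\eta^\pm$ restricts to zero on $N_i$, so $\Theta^\pm$ restricts to zero on $(q^\pm)^{-1}(N_i)$ and \emph{a fortiori} on $R$. Thus every generator of $\I$ vanishes on $R$, and $R$ is an integral manifold.

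On the open set $U$ where \eqref{indcond} holds, the final conclusion follows from the converse established in \S\ref{hypersurfaces}: an integral of $\theta_0,\theta_1$ satisfying the independence condition is an adapted framing along a Hopf hypersurface, and $\theta_1=0$ forces $\w^{2n}_{2n-1}=\alpha\w^{2n-1}$, i.e.\ the structure vector $e_{2n-1}=W$ is principal with curvature $\alpha$. The one point needing care is the dimension gap, since $\dim U=n^2$ while $\rho(U)$ should be $(2n-1)$-dimensional. I would resolve this by noting that the $U(n-1)$ gauge action rotating $e_1,\ldots,e_{2n-2}$ annihilates every $\kappa^\pm_j$ and $\kappa^\pm_0$ (these forms involve only the $\w^j$ and $\psi_j=\w^{2n}_j$, which vanish on gauge directions), hence preserves each leaf of $\K^\pm$ and therefore preserves $R$. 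Its orbits, of dimension $(n-1)^2$, are exactly the fibers of $\rho|_R$, and $n^2=(2n-1)+(n-1)^2$; so $\rho|_U$ has constant rank $2n-1$ and presents $\rho(U)$ as the immersed Hopf hypersurface $M$.

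I expect the delicate step to be the transversality in the first part: one must check that the two Legendrian preimages meet cleanly in the full codimension $2n$, which is precisely where the direct-sum decomposition $\mathrm{span}\{\kappa^+_j\}\oplus\mathrm{span}\{\kappa^-_j\}=\mathrm{span}\{\psi_j,\w^j\}$, and hence the hyperbolicity hypothesis $\cos\phi\ne0$, is indispensable.
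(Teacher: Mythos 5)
Your first two steps are sound and essentially reproduce the paper's own argument. The smoothness and dimension count via transversality of the two preimages rests on exactly the linear-algebra fact the paper uses, namely that $\kappa^+_0,\ldots,\kappa^+_{2n-2},\kappa^-_0,\ldots,\kappa^-_{2n-2}$ form part of a coframe on $\F$ (equivalently, the spans of the $\kappa^+$'s and of the $\kappa^-$'s are complementary, which indeed uses $\cos\phi\ne 0$); the paper packages the same fact slightly differently, by observing that $q^-$ restricts to $(q^+)^{-1}(N_1)$ as a submersion, so that $R$ is the preimage of $N_2$ under a submersion. Your integrality argument (vanishing of $\kappa^\pm_0$ because the $N_i$ are integrals of the contact forms, then vanishing of the $2$-form generators via \eqref{Dcontact} and the isotropy of Legendrian submanifolds) is also correct and is the intended one.

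The gap is in the last step. You claim that the $U(n-1)$-orbits are \emph{exactly} the fibers of $\rho\restr_R$, and you deduce from this that $\rho\restr_U$ has rank $2n-1$. The containment of orbits in fibers is correct (and your verification that the gauge directions are annihilated by all $\kappa^\pm_0,\kappa^\pm_j$, hence tangent to every leaf of $\K^\pm$, is a genuinely nice observation), but containment only yields rank $\le 2n-1$. Equality of fibers with orbits is \emph{equivalent} to the rank being exactly $2n-1$, which is the very thing to be proved, so as written the argument is circular; worse, the claim is false on $R$ in general: the Proposition that follows Theorem \ref{mainthm} in the paper only bounds the rank of $\rho\restr_R$ below by $n$, and wherever the rank drops below $2n-1$ the fibers are strictly larger than the gauge orbits. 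The symptom of the problem is that your rank argument never uses the independence condition \eqref{indcond}, which is precisely the hypothesis that rules out this degeneration on $U$. The direct repair is the paper's: the forms $\w^1,\ldots,\w^{2n}$ are semibasic for $\rho$, and at $u\in U$ the rank of $\rho$ equals the number of pointwise independent forms among their restrictions to $T_uU$; condition \eqref{indcond} gives rank $\ge 2n-1$, while $\w^{2n}=\theta_0=0$ on $R$ gives rank $\le 2n-1$. Note also that the converse from \S\ref{hypersurfaces} that you invoke is stated for $(2n-1)$-dimensional integral manifolds, so it cannot be applied to the $n^2$-dimensional $U$ directly; the paper's device is to choose through each $u\in U$ a $(2n-1)$-dimensional submanifold $\Sigma\subset U$ satisfying \eqref{indcond} (a local cross-section to the fibers of $\rho\restr_U$), which is then the image of an adapted frame field along the local hypersurface, and the vanishing of $\theta_0,\theta_1$ on $\Sigma$ gives the Hopf condition with principal curvature $\alpha$.
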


\begin{proof}  First, we note that the following 1-forms comprise a coframe on $\F$:
$$\w^1, \ldots, \w^{2n-1},\theta_0, \theta_1, \psi_1, \ldots, \psi_{2n-2}, \w^{2i}_j,$$
where $1\le i \le n-1$ and $1\le j < 2i$.  However, a coframe
better-adapted to the two foliations of $\F$ is given by
$$\kappa^+_0, \kappa^-_0, \ldots, \kappa^+_{2n-2}, \kappa^-_{2n-2}, \w^{2n-1}, \w^{2i}_j.$$

Since $N_1 \subset L^+$ is a smooth codimension-$n$ submanifold, $P = (q^+)^{-1}(N_1)$ is
a smooth codimension-$n$ submanifold of $\F$.  Moreover, because the generators
of $\K^+$ span the semibasic forms for $q^+$,
these 1-forms satisfy $n$ homogeneous linear equations
when pulled back to $P$.  (Of course, one of these equations is that $\kappa^+_0=0$.)
Therefore, the generators $\kappa^-_0, \kappa^-_j$ of $\K^-$ pull back to
be linearly independent on $P$.  Because these 1-forms span the semibasic forms for $q^-$,
it follows that $q^-$ restricts to $P$ to be a surjective submersion. (In fact, the
image under $q^-$ of any maximal leaf of $\K^+$ is all of $L^-$.)
Hence $R=P \cap (q^-)^{-1}(N_2)$
is a smooth integral manifold of $\I$, of codimension $2n$ in $\F$.

Assume that the independence condition is satisfied on $U\subset R$.
Because the semibasic forms for $\rho$ are spanned by $\w^1, \ldots, \w^{2n}$
and $\w^{2n}=\theta_0$ pulls back to be zero on $R$, then the restriction
of $\rho$ to $U$ has rank $2n-1$.  It follows that $\rho(U)$ is an immersed hypersurface in $\CH^n$.
It remains to show that this hypersurface is Hopf.

Given any $u \subset U$, $\rho$ is locally a submersion
onto a embedded hypersurface $M$ containing $x=\rho(u)$.
Let $\Sigma \subset U$ be a $(2n-1)$-dimensional submanifold containing $u$, and satisfying
the independence condition.  Then $\Sigma$ is the image of an adapted unitary frame field
$(e_1,\ldots, e_{2n})$ defined on $M$.  Because $\theta_0$ and
$\theta_1$ pull back to be zero on $\Sigma$, it follows that $e_{2n}$ is normal to the hypersurface,
and the structure vector $e_{2n-1}$ is principal with principal curvature $\alpha$, respectively.
\end{proof}

\begin{prop}  Let $R$ be as in Theorem \ref{constructHopf}.
Then the rank of the restriction of $\rho$ to $R$ is at least $n$ at each point.
\end{prop}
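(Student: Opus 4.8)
The plan is to reduce the rank computation on $R$ to the two characteristic foliations and then exploit the transversality of $\K^+$ and $\K^-$. Since $\w^{2n}=\theta_0$ pulls back to zero on $R$, the semibasic forms for $\rho$ restrict there to the span of $\w^1,\ldots,\w^{2n-1}$, so it suffices to show that these $2n-1$ forms have rank at least $n$ at each point of $R$. I would establish this by proving separately that $\w^1,\ldots,\w^{2n-2}$ have rank at least $n-1$ on $R$, and that $\w^{2n-1}$ is independent of them.

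The key structural input is that the product map $q:=q^+\times q^-:\F\to L^+\times L^-$ is a submersion. Indeed, the $4n-2$ forms $\kappa^\pm_0,\kappa^\pm_1,\ldots,\kappa^\pm_{2n-2}$ occur in the coframe on $\F$ used in the proof of Theorem \ref{mainthm}, hence are pointwise independent, and they span the semibasic forms for $q$; since $\dim(L^+\times L^-)=4n-2$ by Proposition \ref{NisSphere}, the map $q$ has full rank. Consequently $R=q^{-1}(N_1\times N_2)$ is a union of fibers of $q$, and $q|_R:R\to N_1\times N_2$ is a submersion whose fibers have dimension $n^2-(2n-2)=(n-1)^2+1$. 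From the induced submersions $q^+|_R:R\to N_1$ and $q^-|_R:R\to N_2$, together with the vanishing of $\kappa^\pm_0$ on $R$, I read off that $W^\pm:=\operatorname{span}\{\kappa^\pm_1,\ldots,\kappa^\pm_{2n-2}\}$ has rank exactly $n-1$ on $R$.

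The decisive point is that $W^+\cap W^-=0$ on $R$, which follows from the fact that the two characteristic foliations span the tangent space. The leaves of $\K^+$ and of $\K^-$ through a point $u$ are the fibers of $q^+|_R$ and $q^-|_R$, of dimension $n^2-(n-1)$ each, while their intersection is a fiber of $q|_R$, of dimension $(n-1)^2+1$; the identity $(n^2-(n-1))+(n^2-(n-1))-((n-1)^2+1)=n^2$ then forces $T_u(\operatorname{leaf}^+)+T_u(\operatorname{leaf}^-)=T_uR$. Any $1$-form lying in $W^+\cap W^-$ is semibasic for both $q^+$ and $q^-$, hence annihilates both leaf tangent spaces, hence vanishes. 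Extracting this transversality from the fibration geometry is the main obstacle; the remainder is linear algebra.

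To finish, note that $\kappa^+_j-\kappa^-_j=-\tfrac2r\cos\phi\,\w^\jbar$, and since $\cos\phi\ne0$ and $j\mapsto\jbar$ permutes $\{1,\ldots,2n-2\}$, the span of $\w^1,\ldots,\w^{2n-2}$ on $R$ equals $\operatorname{span}\{\kappa^+_j-\kappa^-_j\}$. Because $W^+\cap W^-=0$, projection of $W^+\oplus W^-$ onto $W^+$ sends $\kappa^+_j-\kappa^-_j$ to $\kappa^+_j$, so the rank of $\w^1,\ldots,\w^{2n-2}$ is at least $\operatorname{rank}W^+=n-1$. Finally, $\w^{2n-1}$ is a coframe element on $\F$ complementary to the $\kappa^\pm_\bullet$ and the $\w^{2i}_j$, so it restricts to a nonzero form on each fiber of $q|_R$; choosing a tangent vector $v$ to such a fiber with $\w^{2n-1}(v)\ne0$, all the $\w^j$ and $\psi_j$ with $j\le 2n-2$ vanish on $v$, showing $\w^{2n-1}$ is independent of $\w^1,\ldots,\w^{2n-2}$ on $R$. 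Hence $\w^1,\ldots,\w^{2n-1}$ have rank at least $n$, and the restriction of $\rho$ to $R$ has rank at least $n$.
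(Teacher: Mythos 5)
Your proof is correct, but it takes a genuinely different route from the paper's. The paper argues by pointwise linear algebra: the constraints defining $R$ are written as $A\kappa^+=0$, $B\kappa^-=0$ with $A,B$ of rank $n-1$, these are assembled into the block system \eqref{omegaq}, and any linear relation among the restricted $\w^j$ must arise from a row combination annihilating the $\psi$-block; hence there are at most $2n-2-k$ such relations, where $k$ is the rank of $\left(\begin{smallmatrix}A\\ B\end{smallmatrix}\right)$, giving rank at least $k+1\ge n$ (the $+1$ coming from $\w^{2n-1}$, which enters no relation). You instead globalize: you show $q^+\times q^-$ is a submersion onto $L^+\times L^-$, so that $R=q^{-1}(N_1\times N_2)$ fibers over $N_1\times N_2$, and the dimension count $\bigl(n^2-(n-1)\bigr)+\bigl(n^2-(n-1)\bigr)-\bigl((n-1)^2+1\bigr)=n^2$ forces the two characteristic foliations of $R$ to be transverse; dually your $W^+\cap W^-=0$, and the projection trick together with the fiber-direction argument for $\w^{2n-1}$ delivers the bound. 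Both proofs rest on the same two structural facts (each family $\kappa^\pm_j$ satisfies exactly $n-1$ relations on $R$, and jointly the $\kappa^\pm_j$ span the same space as the $\psi_j,\w^j$), but the mechanisms differ, and each buys something. The paper's matrix formulation is quantitatively sharper: the bound $k+1$ can be as large as $2n-1$ (when $\left(\begin{smallmatrix}A\\ B\end{smallmatrix}\right)$ is invertible), which is precisely the quantity behind the genericity discussion following the proposition and worked out in Proposition \ref{geneprop} for $n=2$; your bound is capped at $n$ for every choice of $N_1,N_2$, since the transversality you exploit is unconditional. In exchange, your argument is coordinate-free and isolates a geometric fact the paper never states explicitly: the two characteristic foliations restricted to $R$ are everywhere transverse, intersecting exactly in the fibers of $q^+\times q^-$.
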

\begin{proof}
On $R$ the 1-forms $\kappa^+_j$ satisfy $n-1$ independent homogeneous
linear equations, and the same is true for the $\kappa^-_j$.
Write these linear relations in the form
$$ A \kappa^+ =0, \qquad B\kappa^-=0,$$
where $A,B$ are $(n-1) \times (2n-2)$ matrices of rank $n-1$ whose entries are functions on $R$,
and we group the forms $\kappa^\pm_j$ into vector-valued 1-forms $\kappa^+, \kappa^-$
with $2n-2$ components each.
Using the formulas for the $\kappa^\pm_j$ in terms of
the canonical and connection forms of $\F$, the above relations imply that
\begin{equation}\label{omegaq}
\begin{pmatrix}
A & \tfrac{\cos\phi}{r} A \\ B & -\tfrac{\cos\phi}{r} B \end{pmatrix}
\begin{bmatrix} \psi - \tfrac{\sin\phi}{r} \w \\ \tilde J\w \end{bmatrix} = 0,
\end{equation}
where $\psi$ has components $\psi_j$, $\w$ has components $\w^j$ for $1\le j \le 2n-2$,
and $\tilde J$ is the submatrix of $J$ obtained by deleting the last two rows and columns.
Any linear relation that holds among the pullbacks of the $\w^j$ to $R$ arises by linearly combining the rows
of the matrix on the left in \eqref{omegaq} to obtain a row with the first
$2n-2$ entries equal to zero.
If we let $k$ be the rank of the square matrix $\left(\begin{smallmatrix}A \\ B\end{smallmatrix}\right)$,
then the number of linear relations among the $\w^j$ is at most $2n-2-k$.
It follows that the number of linear independent
1-forms among $\w^1,\ldots, \w^{2n-1}$ restricted to $R$ is at least $k+1$, which is
at least $n$.
\end{proof}
Of course, for generic pairs of contact submanifolds $N_1, N_2$ we
expect the rank of $\rho$ to be equal to $2n-1$ on an open dense set in $R$.
For the case $n=2$, we will
express this genericity condition in concrete form  at the end of \S4.

\begin{prop} Let $N_1, N_2$ and $R$ be as in Theorem \ref{mainthm}.
Assume in addition that $N_1$ and $N_2$ are closed, and the independence
condition holds on all of $R$.  Then $M =\rho(R)$ is a complete
Hopf hypersurface in $\CH^n$.
\end{prop}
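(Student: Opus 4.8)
The plan is to deduce completeness from the explicit parametrization \eqref{dalembert}. Under the standing hypotheses --- $N_1$ and $N_2$ closed, and the independence condition valid on all of $R$ --- the construction of Theorem \ref{mainthm} assigns to each pair $(p,q) \in N_1 \times N_2$, together with one additional real parameter $t$ running along the common characteristic (the one-dimensional intersection of the two characteristic foliations of $\S\ref{hypersurfaces}$), a point of $M = \rho(R)$. Thus \eqref{dalembert} furnishes a map $X \colon \mathcal D \to \CH^n$ with $\mathcal D := N_1 \times N_2 \times \R$ whose image is $M$; and since the independence condition forces $\rho$ to have rank $2n-1$ everywhere on $R$, the map $X$ is an immersion of constant rank $2n-1$. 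Regarding $M$ as this immersed hypersurface with the metric $h := X^{*}g$ induced from the metric $g$ on $\CH^n$, the map $X \colon (\mathcal D, h) \to M$ is a surjective local isometry. (If the characteristic parameter is not globally a product factor, one runs the same argument on the total space of the corresponding $\R$-bundle over $N_1 \times N_2$; only compactness of the base and completeness of the fibers will be used.)

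First I would reduce the statement to completeness of the domain. If $(\mathcal D, h)$ is complete, then so is $M$: given a geodesic segment of $M$, choose a preimage of its initial point under the surjection $X$, lift the initial data through the local diffeomorphism $X$, extend the lifted curve --- which is a geodesic, as $X$ preserves the metric --- for all time using completeness of $\mathcal D$, and push it forward by $X$. Uniqueness of geodesics with prescribed initial point and velocity guarantees that the pushed-forward geodesic prolongs the original segment. Hence every geodesic of $M$ extends indefinitely, and Hopf--Rinow yields completeness. It therefore suffices to show that $(\mathcal D, h)$ is complete, which I would establish by verifying that every $h$-Cauchy sequence converges.

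So let $(p_k, q_k, t_k)$ be an $h$-Cauchy sequence in $\mathcal D$. Because $N_1$ and $N_2$ are closed, $N_1 \times N_2$ is compact, so the base components $(p_k, q_k)$ subconverge; the only possible obstruction to convergence is escape of $t_k$ to $\pm\infty$. The crux is the claim that this cannot occur along a Cauchy sequence: the characteristic curves $t \mapsto X(p, q, t)$ are complete curves in $\CH^n$ that leave every compact set as $t \to \pm\infty$, uniformly in $(p,q) \in N_1 \times N_2$. Granting this, fix a basepoint $X(p_0, q_0, 0)$; since $X$ is a local isometry, $h$-lengths of curves in $\mathcal D$ equal $g$-lengths of their images, whence $d_h\big((p_0,q_0,0),(p_k,q_k,t_k)\big) \ge d_g\big(X(p_0,q_0,0), X(p_k,q_k,t_k)\big)$, and the right-hand side diverges if $|t_k| \to \infty$. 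This contradicts the Cauchy property, so $t_k$ is bounded, subconverges, and the sequence converges. Thus $(\mathcal D, h)$ is complete.

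The main obstacle is exactly the uniform escape claim, and this is where both hypotheses enter. Because the independence condition holds on all of $R$, the two points $p = q^{+}(u)$ and $q = q^{-}(u)$ on the spheres of Proposition \ref{NisSphere} are distinct at every point of $R$; by compactness of $N_1 \times N_2$ they stay separated by a uniform gap, and it is this separation that, through the geometry of the characteristics, keeps each curve $t \mapsto X(p,q,t)$ of infinite length and prevents it from degenerating toward the common sphere $S^{2n-1}$ at finite $h$-distance. To make this rigorous one must extract from \eqref{dalembert} the asymptotics of $X(p,q,t)$ as $t \to \pm\infty$ --- showing that the induced speed $|\partial X / \partial t|_g$ is bounded below, equivalently that each characteristic curve has infinite length and exits every compact subset of $\CH^n$ --- and then promote this to a bound uniform over the compact parameter space $N_1 \times N_2$. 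I expect this asymptotic analysis of the explicit formula, together with the uniformity, to be the one genuinely substantive step; the rest is the formal Hopf--Rinow packaging above.
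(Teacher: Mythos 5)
Your proof is incomplete at its crux, and its setup has an additional flaw. The setup problem: the map $X$ you want is not defined on $\mathcal D = N_1 \times N_2 \times \R$. The d'Alembert formula \eqref{dalembert} only makes sense on the open set $U \subseteq N_1 \times N_2$ where $\zeta = \langle \bn_1, \bn_2\rangle_\C \neq 0$ (condition \eqref{firstcond} shows every point of $R$ lies over such a pair, but $\zeta$ certainly can vanish on $N_1 \times N_2$ --- e.g.\ on the diagonal when $N_1 = N_2$, since $\langle \bn,\bn\rangle_\C = \langle \bn,\bn\rangle = 0$ for a null lift), and it further presupposes a smooth global branch $\tau$ with $\tau^2 = \tfrac12(1-\ri\tan\phi)/\zeta$ on $U$, which the paper only hypothesizes. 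Since $U$ is in general a non-compact open subset of the compact product, your dichotomy ``the base components subconverge, so the only possible escape is $|t_k| \to \infty$'' is false: an $h$-Cauchy sequence may have base components converging to the zero locus of $\zeta$, a set outside your parameter space, and excluding that would require a second asymptotic estimate (that $X(p_k,q_k,t_k)$ then also leaves every compact subset of $\CH^n$) which you never identify. The central problem: the one step you yourself flag as substantive --- the uniform escape of the characteristic curves as $t \to \pm\infty$ --- is precisely what carries the whole argument, and you do not prove it; ``I expect this asymptotic analysis to be the one genuinely substantive step'' leaves the proof with a hole exactly where the content should be.

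The paper's own proof needs none of this machinery, which shows the result is soft. It never invokes \eqref{dalembert}: $R = (q^+)^{-1}(N_1) \cap (q^-)^{-1}(N_2)$ is closed in $\F$ because $q^\pm$ are continuous and $N_1, N_2$ are closed; the projection $\rho: \F \to \CH^n$ is proper, because over a geodesic ball $\Bbar$ the bundle is trivial with compact fiber, so $\rho^{-1}(\Bbar) \cong \Bbar \times U(n)$ is compact; hence if $p_k \in M$ converges to $\pbar \in \CH^n$, any choice of preimages $q_k \in R$ eventually lies in the compact set $\rho^{-1}(\Bbar)$ and subconverges to some $\qbar$, which lies in $R$ by closedness, giving $\pbar = \rho(\qbar) \in M$. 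So $M$ is a closed subset of the complete manifold $\CH^n$, and a closed submanifold of a complete Riemannian manifold is complete (an intrinsically Cauchy sequence is ambiently Cauchy, converges in $\CH^n$, and its limit lies in $M$ by closedness). If you want a correct proof, either carry out the asymptotic analysis you deferred and repair the domain issue, or switch to this properness-plus-closedness argument, which uses only that $N_1, N_2$ are closed and that the fibers of the frame bundle are compact.
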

\begin{proof}  Since $R$ is the intersection of closed sets $(q^+)^{-1}(N_1)$ and
$(q^-)^{-1}(N_2)$, then $R$ is closed.   We will show that $M$ is closed.

Let $p_k$ be a sequence of points in $M$ converging to $\pbar \in \CH^n$.
Let $\Bbar$ be a geodesic ball centered at $\pbar$, such that $\F$ is trivial
on an open set containing $\Bbar$.  Then $\rho^{-1}(\Bbar)$ is a compact
set, diffeomorphic to $\Bbar \times U(n)$.
Choose a sequence $q_k \in R$ such that $\rho(q_k)=p_k$.  Then for
$k$ sufficiently large, the points $q_k$ lie in $\rho^{-1}(\Bbar)$.
Thus, there is a subsequence converging to a point $\qbar \in R$.
Then by continuity of $\rho$, $\pbar = \rho(\qbar)$ belongs in $M$.

Because $M$ is a closed submanifold of the complete space $\CH^n$,  $M$ is also complete.
\end{proof}

\section{Geometry of the Leaf Spaces}
\subsection{The Hopf Fibration and Moving Frames}\label{Qstructure}
In this section we review the geometry that $\CP^n$ and $\CH^n$ inherit via
the Hopf fibration.  We will later use this fibration to establish
results about the leaf spaces discussed in \S\ref{hypersurfaces},
and to construct concrete examples of Hopf hypersurfaces in $\CH^n$.

On $\C^{n+1}$ define the real quadratic form
$$\langle \vz,\vz \rangle= |z_1|^2 + \cdots +|z_n|^2 + \emp |z_0|^2, \qquad \emp = \pm 1,$$
and let $Q \subset \C^{n+1}$ be the quadric hypersurface defined by
$\langle \vz,\vz\rangle=\emp r^2$ for $r>0$.
Let $Q$ have the metric given by restricting this quadratic form; this metric will be Riemannian
if $\emp=1$ and Lorentzian if $\emp=-1$.
The {\em Hopf fibration} $\pi : Q \to X$ is the quotient
by the isometric action $\vz \mapsto e^{\ri \theta} \vz$.
of $S^1$ on $Q$.  When $\emp = +1$, $X$ is $\CP^n(r)$, and when $\emp=-1$,
$X$ is $\CH^n(r)$.  We endow $X$ with the unique Riemannian metric such that the Hopf fibration
is a (semi)-Riemannian submersion.  The complex structure on $X$ is also inherited
via this submersion.

We say that an orthogonal basis $\ehat_0, \ehat_1,\ldots, \ehat_{2n}$ for $T_\vz Q$ is an {\em adapted unitary frame}
at $\vz$ if, as vectors in $\C^{n+1}$, the members of the frame satisfy
\begin{equation}\label{ezero}
\ehat_0 = \dfrac{\ri}{r} \vz , \quad \ehat_2 = \ri \ehat_1, \quad \ehat_4 = \ri \ehat_3, \ \ldots
\end{equation}
and $\langle \ehat_j, \ehat_j \rangle = 1$ for $1\le j \le 2n$.
(Note that $\langle \ehat_0, \ehat_0 \rangle = \emp$.)
Let $\G$ be the sub-bundle of the general linear frame bundle of $Q$ whose fiber at
$\vz$ consists of all adapted unitary frames for $T_{\vz}Q$.  Then the structure group of $\G$ is
$U(n)$.  A key observation, which we will make use of below in proving Proposition \ref{NisSphere},
is that $\G$ can be identified with a matrix Lie group, by taking the
vectors $\frac{1}{r} \vz, \ehat_1, \ehat_3, \ldots, \ehat_{2n-1}$ as columns
in an $(n+1)\times (n+1)$ complex-valued matrix.  Thus $\G$ is identified
with $U(n+1)$ when $\emp=1$ or with $U(1,n)$ when $\emp=-1$.

We define a submersion $\Pi:\G \to \F$ that sends
an adapted unitary frame $(\ehat_0,\ehat_1, \ehat_2, \ldots, \ehat_{2n})$ at $\vz$
to the unitary frame $(\pi_* \ehat_1, \pi_* \ehat_2, \ldots, \pi_* \ehat_{2n})$ at $\pi(\vz)$.
Conversely, given a unitary frame
$(e_1,\ldots ,e_{2n})$ at $x \in X$ and a point $\vz \in Q$ such that
$\pi(\vz)=x$, there is a unique adapted unitary frame at $\vz$ such that
$\pi_* \ehat_j = e_j$; we will refer to this as the {\em horizontal lift} at $\vz$
of $(e_1, \ldots, e_{2n})$.
Similarly, given a vector field $V$ on an open set $\setU\subset X$, we
may define a vector field $V^H$ on $\pi^{-1}(\setU) \subset Q$ such that
$V^H$ is orthogonal to the fibers of $\pi$, and $\pi_* (V^H)_\vz = V_{\pi(\vz)}$ for each $\vz\in \pi^{-1}(\setU)$; this $V^H$ is called the {\em horizonal lift} of $V$.

On $\G$, we regard $\vz$ (the basepoint map) and $\ehat_\alpha$ as $\C^{n+1}$-valued
functions.  We define 1-forms $\eta^\alpha$ on $\G$
such that
\begin{equation}\label{deez}
d\vz = \ehat_\alpha \eta^\alpha.
\end{equation}
(In what follows, we let $0 \le \alpha,\beta \le 2n$ and $1\le j,k,\ell,m \le 2n$.)
Multiplying by $\ri/r$ gives
$$d\ehat_0 = -\frac1{r^2} \vz \,\eta^0 + \frac1r \ehat_j J^j_k \eta^k,$$
where $J$ is defined by \eqref{defofJmatrix}.
We also define 1-forms $\eta^j_k$ on $\G$ by resolving
the derivatives of the frame vectors:
\begin{equation}\label{deen}
d\ehat_j = \ehat_k \eta^k_j - \frac{\emp}{r^2} \vz \,\eta^j
- \frac{\emp}r \ehat_0 \,J^j_k \eta^k.
\end{equation}
(The last two terms are deduced by differentiating $\langle \vz, \ehat_j \rangle = 0$ and $\langle \ehat_0, \ehat_j \rangle =0$.)
Computing $d\langle \ehat_j, \ehat_k \rangle =0$ shows that
the $\eta^j_k$ are skew-symmetric, while differentiating \eqref{ezero} gives
\begin{equation}\label{commutewithJ}
J^k_\ell \eta^\ell_j = \eta^k_\ell J^\ell_j.
\end{equation}
We note that there are no further relations among these 1-forms;
in fact, the forms $\eta^\alpha$ and $\eta^j_k$ (for, say, $j$ even and $k < j$)
together comprise a coframe on $\G$.  In fact, these are a basis for the left-invariant
1-forms when $\G$ is regarded as a Lie group.

The structure equations for the coframe on $\G$ are obtained by
taking exterior derivatives of the defining equations for these 1-forms.
Differentiating \eqref{deez} gives the structure equations
\begin{equation}\label{strone}
d\begin{bmatrix}\eta^0 \\ \eta^j \end{bmatrix}
=
- \begin{pmatrix} 0 & -\frac{\emp}{r} J^k_j \eta^j \\
\frac{1}{r} J^j_k \eta^k & \eta^j_k \end{pmatrix} \&
\begin{bmatrix} \eta^0 \\ \eta^k \end{bmatrix}.
\end{equation}
while differentiating \eqref{deen} leads to
\begin{equation}\label{strtwo}
\begin{aligned} d\eta^j_k
&= -\eta^j_\ell \& \eta^\ell_k +\dfrac{\emp}{r^2}
\left(J^j_\ell \eta^\ell \& J^k_m \eta^m + \eta^j \& \eta^k\right).
\end{aligned}
\end{equation}
Next, we need to calculate the pullbacks under $\Pi$ of the canonical and connection 1-forms on $\F$.

\begin{lemma}\label{pullbackvalues}
 $\Pi^* \w^j = \eta^j$ and $\Pi^*\w^j_k = \eta^j_k - \tfrac1{r} J^j_k \eta^0.$
\end{lemma}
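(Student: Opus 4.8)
The plan is to establish the two identities separately: the first follows directly from the definition of the tautological forms, and the second is then forced by the uniqueness of the connection forms.

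First I would prove $\Pi^*\w^j = \eta^j$ by unwinding the definition of $\Pi$. Writing $\vz$ also for the basepoint projection $\G \to Q$, one has $\rho\circ\Pi = \pi\circ\vz$, so for any $V \in T\G$, $\rho_*(\Pi_* V) = \pi_*\bigl(d\vz(V)\bigr)$. Expanding $d\vz = \ehat_\alpha\eta^\alpha$ from \eqref{deez} and using that $\ehat_0 = \tfrac{\ri}{r}\vz$ is tangent to the Hopf fibers, so $\pi_*\ehat_0 = 0$, while $\pi_*\ehat_j = e_j$ is by definition the $j$th vector of the frame $\Pi(g)$, this collapses to $\rho_*(\Pi_* V) = \eta^j(V)\, e_j$. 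On the other hand, the defining property of the tautological forms reads $\rho_*(\Pi_* V) = \w^j(\Pi_* V)\, e_j = (\Pi^*\w^j)(V)\, e_j$. Matching coefficients in the basis $(e_j)$ gives $\Pi^*\w^j = \eta^j$.

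For the connection forms, set $\sigma^j_k := \eta^j_k - \tfrac1r J^j_k\eta^0$. I would note first that these forms have exactly the two features characterizing pulled-back connection forms: they are skew-symmetric in $j,k$ (the $\eta^j_k$ are skew by the computation of $d\langle\ehat_j,\ehat_k\rangle = 0$ recorded in the text, and $J$ is skew), and they satisfy the torsion-free equation $d\eta^j = -\sigma^j_k \& \eta^k$. The latter is a rearrangement of \eqref{strone}, which reads $d\eta^j = -\eta^j_k \& \eta^k - \tfrac1r J^j_k\eta^k \& \eta^0$; since $\eta^k \& \eta^0 = -\eta^0 \& \eta^k$, the right side equals $-(\eta^j_k - \tfrac1r J^j_k\eta^0) \& \eta^k = -\sigma^j_k \& \eta^k$. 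Pulling back $d\w^j = -\w^j_k \& \w^k$ from $\F$ and invoking the first identity gives $d\eta^j = -(\Pi^*\w^j_k) \& \eta^k$ as well. Subtracting, the skew-symmetric 1-forms $\tau^j_k := \Pi^*\w^j_k - \sigma^j_k$ satisfy $\tau^j_k \& \eta^k = 0$ for every $j$.

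It remains to deduce $\tau^j_k = 0$, and this is the step requiring care. I would expand each $\tau^j_k$ in the coframe $\{\eta^0,\eta^1,\dots,\eta^{2n},\eta^p_q\}$ on $\G$, substitute into $\tau^j_k \& \eta^k = 0$, and separate the independent $2$-forms $\eta^0 \& \eta^k$, $\eta^\ell \& \eta^k$, and $\eta^p_q \& \eta^k$. The $\eta^0$- and $\eta^p_q$-components are then forced to vanish, so $\tau^j_k = p^j_{k\ell}\,\eta^\ell$ is semibasic; the surviving relation makes $p^j_{k\ell}$ symmetric in $(k,\ell)$, and together with the skew-symmetry of $\tau$ in $(j,k)$ the usual permutation argument yields $p^j_{k\ell} = 0$. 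This is precisely the uniqueness-of-connection computation, and the only real obstacle is the bookkeeping needed to justify the coefficient-matching, namely that $\{\eta^0,\eta^j,\eta^p_q\}$ genuinely forms a coframe on $\G$; this is asserted in the text, and granting it the conclusion follows.
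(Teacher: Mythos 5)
Your proof is correct, and its first half is essentially the paper's: both arguments unwind the tautological property of the canonical forms, using $\pi_*\ehat_0=0$ and $\pi_*\ehat_j=e_j$ to match $\eta^j$ with $\Pi^*\w^j$ (the paper routes this through arbitrary sections $f$, $\sigma$, $\fhat$ and a commutative diagram, but the content is identical). For the connection forms, however, you take a genuinely different route. The paper's proof is geometric: it notes that the matrix in \eqref{strone} gives the Levi-Civita connection $\widehat\nabla$ of $Q$, applies $\pi_*$ to $\widehat\nabla_{\bv}\ehat_j$, and invokes the Riemannian submersion property $\pi_*(\widehat\nabla_{V^H}W^H)=\nabla_V W$ to identify the result with the connection forms of the quotient, the term $-\tfrac1r J^j_k\eta^0$ arising from the vertical component. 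You instead run the uniqueness-of-connection argument upstairs on $\G$: both $\Pi^*\w^j_k$ and $\sigma^j_k=\eta^j_k-\tfrac1r J^j_k\eta^0$ are skew-symmetric and satisfy the torsion-free structure equation against the $\eta^k$, so their difference $\tau^j_k$ is skew and satisfies $\tau^j_k\wedge\eta^k=0$; Cartan's lemma (legitimate here, since the text asserts that the forms $\eta^0,\eta^j,\eta^p_q$ comprise a coframe on $\G$) makes $\tau^j_k$ semibasic with coefficients symmetric in the lower pair, and the standard three-index permutation argument kills it. Both proofs are complete. Yours is more elementary and self-contained: it needs only the structure equations already derived on $\G$ and $\F$ plus linear algebra, with no horizontal lifts and no O'Neill-type submersion identity; indeed, the paper's premise that \eqref{strone} encodes the Levi-Civita connection of $Q$ is itself justified by exactly this uniqueness principle, so in effect you have made the paper's implicit reasoning explicit. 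What the paper's route buys is brevity and geometric transparency: it exhibits $-\tfrac1r J^j_k\eta^0$ as the contribution of the fiber direction of the Hopf fibration, which is the viewpoint the rest of \S3 is built on.
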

\begin{proof}
Let $\fhat = (\ehat_0, \ldots, \ehat_{2n})$ be an arbitrary section of $\G$, defined on an open set
$\Uhat \subset Q$.  Then \eqref{deez} implies that the 1-forms $\fhat^* \eta^\alpha$
are dual to the frame vectors $\ehat_\alpha$, i.e.,
\begin{equation}\label{vdecompose}
\bv = \left( \bv \intprod \fhat^* \eta^\alpha\right) \ehat_\alpha, \qquad \forall \bv \in T_\vz Q, \ \vz \in \Uhat.
\end{equation}
Hence, the 1-forms $\eta^j$ on $\G$ are the pullbacks of the canonical 1-forms on the
general frame bundle of $Q$.

Now let $f=(e_1, \ldots, e_{2n})$ be an arbitrary section of $\F$, defined on an open set $\setU \subset X$,
and let $\sigma :\setU \to Q$ be an arbitrary lift (i.e., $\pi \circ \sigma$ is the identity on $\setU$).
Let $\ehat_j$ be horizontal lifts of the $e_j$, and let $\ehat_0$ be such that $(\ehat_0, \ldots,\ehat_{2n})$
is an adapted unitary frame on $\sigma(\setU)$.  Use pushforward by the $S^1$ action on $Q$ (which
preserves horizontality) to extend this frame to a section $\fhat$ defined on an
open set $\Uhat$ containing $\sigma(\setU)$.  These maps are summarized by
the following commutative diagram.
\setlength{\unitlength}{1mm}
\begin{center}
\begin{picture}(40,25)(0,5)
\put(0,5){$\CH^n \supset \setU$}
\put(4,25){$Q \supset \Uhat$}
\put(13,10){\vector(0,1){13}}
\put(10,16){$\sigma$}
\put(16,23){\vector(0,-1){13}}
\put(17,16){$\pi$}
\put(17,7){\vector(1,0){15}}
\put(23,10){$f$}
\put(17,26){\vector(1,0){15}}
\put(23,29){$\fhat$}
\put(33,5){$\F$}
\put(33,25){$\G$}
\put(34,23){\vector(0,-1){13}}
\put(36,16){$\Pi$}
\end{picture}
\end{center}

For any vector $\bv \in T_\vz Q$, $\vz \in \Uhat$, we can decompose $\bv$ as in \eqref{vdecompose}.
By applying $\pi_*$ to each side, we conclude that
$$\bv \intprod \fhat^* \eta^j = (\pi_* \bv) \intprod f^* \w^j.$$
Using the commutative diagram, we obtain
$$\bv \intprod \fhat^* \eta^j =\bv \intprod (f\circ \pi )^*\w^j = \bv \intprod (\Pi \circ \fhat)^* \w^j,$$
and conclude that $\fhat ^*(\eta^j - \Pi^* \w^j)=0$.  Since $f$ and $\sigma$ are arbitrary,
then $\Pi^* \w^j = \eta^j$.

Similarly, because the matrix in \eqref{strone} gives the connection forms for the Levi-Civita connection
$\widehat\nabla$ on $Q$, we have
$$\widehat\nabla_\bv \ehat_j = (\bv \intprod J^j_k \fhat^*\eta^k) \ehat_0
+ (\bv \intprod \fhat^* \eta^k_j) \ehat_k$$
for any section $\fhat$ of $\G$.  Then applying $\pi_*$ to both sides,
and using the Riemannian submersion property
that $\pi_* \left( \widehat{\nabla}_{V^H}\,{W^H}\right) = \nabla_V W$
for any vector fields $V,W$ on $X$, we conclude that
$$\Pi^*\w^j_k = \eta^j_k - \tfrac{1}{r} J^j_k \eta^0.$$
\vskip -.2in
\end{proof}

We note that by pulling back equation \eqref{defofCurvature} via $\Pi$ and using
the results of Lemma \ref{pullbackvalues} we can calculate the curvature forms
given by \eqref{curvaturevalues}.

\subsection{Leaf Spaces and Spheres}\label{putoff}
\begin{proof}[Proof of Proposition \ref{NisSphere}]
We now assume that $\emp=-1$ and let $1\le j,k\le 2n-2$.
Let $\Khat^\pm$ be the Pfaffian systems on $\G$ spanned
by the pullbacks, via $\Pi$, of the characteristic systems $\K^\pm$ on $\F$, i.e.,
$\Khat^\pm$ is spanned by the 1-forms
\begin{align*}
\khat^\pm_0 := \Pi^* \kappa^\pm_0 &=\eta^{2n}_{2n-1} - \dfrac{1}{r}\eta^0-\dfrac{2}{r}(\sin\phi\, \eta^{2n-1}\pm\cos\phi\,\eta^{2n})
\intertext{and}
\khat^\pm_j := \Pi^* \kappa^\pm_j &= \eta^{2n}_j - \dfrac{1}{r}(\sin\phi\, \eta^j \pm \cos\phi \eta^\jbar).
\end{align*}
The Pfaffian systems $\Khat^\pm$ satisfy the Frobenius condition, and maximal integral manifolds of $\Khat^\pm$
are in 1-to-1 correspondence, via $\Pi$, with those of $\K^\pm$.  However, the 1-forms $\khat^\pm_0$ and $\khat^\pm_j$
are left-invariant 1-forms on $\G$, and thus the maximal integral manifolds of $\Khat^\pm$ are left cosets
of codimension-$(2n-1)$ connected Lie subgroups $\H^\pm \subset \G$.  We give the leaf spaces $L^\pm$ a manifold structure
by identifying them with the homogeneous spaces $\G/ \H^\pm$; then it is automatic that the quotient maps
$q^\pm$ are smooth submersions.

It remains to identify the spaces $L^\pm$ as spheres.
Let $\V$ be the cone of nonzero null vectors (with respect to $\langle,\rangle$ ) in $\C^{n+1}$.  The set $\P\V$ of
complex null lines in $\C^{n+1}$ is the image of $\V$ under the
projectivization map $\pi$.  We identify $\P\V$ with the unit sphere $S^{2n-1}\subset \C^n$ since,
in terms of homogeneous coordinates $[z_0, z_1, \ldots, z_n]$ on $\CP^n$,
$\P\V$ lies entirely in the domain of the affine coordinates defined by
$w_1=z_1/z_0$, \ldots, $w_n=z_n/z_0$, and the image of $\pi(\V)$ is the
codimension-one set in $\C^n$ defined by $|w_1|^2 + \cdots + |w_n|^2=1$.

Next, define maps $\bg^\pm:\G \to\V$ by
\begin{equation}\label{biggy}
\bg^\pm = \ehat_0 - (\sin\phi\, \ehat_{2n-1} \pm \cos\phi\, \ehat_{2n}).
\end{equation}
Let $\bg^\pm_\C = \pi \circ \bg^\pm$ be the corresponding maps from $\G$ to $S^{2n-1}$.
Because any null vector can be expressed as the sum of spacelike and timelike vectors, the maps $\bg^\pm_\C$
are surjective.  Moreover, the fibers of $\bg^+_\C$ (respectively, $\bg^-_\C$) are cosets of $\H^+$ (resp., $\H^-$).
To see why, first note that each fiber of $\bg^+_\C$ is acted on simply transitively by the isotropy subgroup of $\G$ preserving
a null line in $\C^{n+1}$; in particular, such isotropy subgroups are connected.  Then, differentiating
$\bg^+$ shows that $\bg^+_\C$ is constant along the cosets of $\H^+$:
\begin{multline}\label{dgplus}
d\bg^+ = \frac1r \bg^+ (\sin\phi\, \eta^{2n} - \cos \phi\, \eta^{2n-1})
+\frac{\ri}r \bg^+ (\eta^0 - \sin \phi\, \eta^{2n-1} - \cos\phi\, \eta^{2n}) \\
+ (\cos\phi \, \ehat_{2n-1} - \sin\phi\,\ehat_{2n})\khat^+_0 +(\cos\phi\, \ehat_j -\sin \phi\,\ehat_\jbar) \khat^+_j.
\end{multline}
(This follows by noticing that the first two terms are complex multiples of $\bg^+$ while all remaining
terms involve 1-forms in $\Khat^+$.)   Thus, each coset lies in a single fiber of $\bg^+_\C$,
but since that fiber is connected, the fiber consists of exactly one coset of $\H^+$.
Therefore, $\bg^+$ covers a bijective map $\gamma^+ : L^+ \to \P\V \cong S^{2n-1}$.  We summarize this situation in the following commutative diagram.
\setlength{\unitlength}{1mm}
\begin{center}
\begin{picture}(40,30)(0,10)
\put(5,32){$\G$}
\put(10,33){\vector(1,0){17}}
\put(17,34.5){$\bg^+$}
\put(30,32){$\mathcal V$}
\put(31,30){\vector(0,-1){15}}
\put(33,22){$\pi$}
\put(11,30){\vector(1,-1){16}}
\put(17,25){$\bg^+_\C$}
\put(-5,22){$q^+\!\circ \Pi$}
\put(7,30){\vector(0,-1){15}}
\put(4,9){$L^+$}
\put(11,11){\vector(1,0){16}}
\put(16,12.5){$\gamma^+$}
\put(29,9){$S^{2n-1}$}
\end{picture}
\end{center}
In addition, because the semibasic 1-forms $\khat^+_0, \ldots, \khat^+_{2n-2}$ appear in \eqref{dgplus} with
linearly independent vector coefficients, the map $\gamma^+:L^+ \to S^{2n-1}$ is a local diffeomorphism.
Hence, $L^+$ is diffeomorphic to the sphere, and the same is true for $L^-$.
\end{proof}

Once we identify the leaf spaces $L^+$ and $L^-$ with the projectivized null cone, the
contact structures on these spaces have a geometric interpretation.
\begin{prop}\label{contactcond}
Let  $N \subset L^\pm$ be a submanifold, and
let $\bn:N \to \mathcal V$ be any lift of $\gamma^\pm \restr_N$.
Then $N$ is a Legendrian submanifold if and only if
\begin{equation}\label{nullcond}
\left\langle d\bn, \ri \bn \right\rangle=0.
\end{equation}
\end{prop}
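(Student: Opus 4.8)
The plan is to match the abstract contact structure on $L^\pm$ with the concrete null condition \eqref{nullcond}. By construction, $N\subset L^\pm$ is Legendrian precisely when the contact form vanishes on $N$; since the annihilator of the contact distribution pulls back via $q^\pm$ to a multiple of $\kappa^\pm_0$, and $\Pi^*\kappa^\pm_0=\khat^\pm_0$ (as recorded in the proof of Proposition \ref{NisSphere}), this is the same as the vanishing of $\khat^\pm_0$ on $(q^\pm\circ\Pi)^{-1}(N)\subset\G$. The bridge to \eqref{nullcond} is the canonical lift $\bg^\pm:\G\to\V$ of \eqref{biggy}, which projects under $\pi$ to $\gamma^\pm\circ q^\pm\circ\Pi$; the heart of the argument is the pointwise identity $\langle d\bg^\pm,\ri\bg^\pm\rangle=\khat^\pm_0$.

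First I would check that \eqref{nullcond} does not depend on the choice of lift, so that it can be evaluated with any convenient one. Two lifts of $\gamma^\pm\restr_N$ differ by a nowhere-vanishing complex function $\lambda$, and because $\bn$ takes values in the null cone we have $\langle\bn,\bn\rangle=0$, hence also $\langle\bn,\ri\bn\rangle=0$ and $\langle d\bn,\bn\rangle=0$. A short expansion using these relations and the invariance of $\langle\,,\rangle$ under multiplication by $\ri$ then gives $\langle d(\lambda\bn),\ri\lambda\bn\rangle=|\lambda|^2\langle d\bn,\ri\bn\rangle$, so the vanishing of \eqref{nullcond} is intrinsic to $N$. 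Pulling everything back to $\tilde N=(q^\pm\circ\Pi)^{-1}(N)$, the composite $\bn\circ q^\pm\circ\Pi$ and the restriction of $\bg^\pm$ are two lifts of one and the same sphere-valued map, so the invariance just established lets me replace $\bn$ by $\bg^\pm$.

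The main step is the identity $\langle d\bg^\pm,\ri\bg^\pm\rangle=\khat^\pm_0$, which I would extract directly from the expansion \eqref{dgplus} of $d\bg^+$ (and its sign-analogue for $\bg^-$). The first two terms there are a real and an imaginary scalar multiple of $\bg^+$; pairing them against $\ri\bg^+$ produces the factors $\langle\bg^+,\ri\bg^+\rangle$ and $\langle\ri\bg^+,\ri\bg^+\rangle=\langle\bg^+,\bg^+\rangle$, both of which vanish because $\bg^+$ is null, so these terms drop out. The coefficient vectors $\cos\phi\,\ehat_j-\sin\phi\,\ehat_\jbar$ of the $\khat^+_j$ ($j\le 2n-2$) involve only frame vectors orthogonal to both $\bg^+=\ehat_0-\sin\phi\,\ehat_{2n-1}-\cos\phi\,\ehat_{2n}$ and $\ri\bg^+$, so their pairings with $\ri\bg^+$ vanish as well. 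What survives is the $\khat^+_0$ term, whose coefficient is $\langle\cos\phi\,\ehat_{2n-1}-\sin\phi\,\ehat_{2n},\ri\bg^+\rangle$; writing $\ri\bg^+=\ri\ehat_0+\cos\phi\,\ehat_{2n-1}-\sin\phi\,\ehat_{2n}$ and using $\ri\ehat_0=-\tfrac1r\vz\perp\ehat_j$ together with the orthonormality of the adapted frame, this coefficient equals $\cos^2\phi+\sin^2\phi=1$. Hence $\langle d\bg^\pm,\ri\bg^\pm\rangle=\khat^\pm_0$.

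Combining the two steps finishes the proof: the lift-independent quantity \eqref{nullcond} vanishes along $N$ if and only if $\khat^\pm_0$ vanishes on $\tilde N$, which is exactly the condition that the contact form vanish on $N$, i.e.\ that $N$ be Legendrian. I expect the only real obstacle to be the bookkeeping in the main identity — verifying that nullity annihilates the two $\bg^+$-proportional terms, that the frame relations eliminate every $\khat^+_j$ contribution, and that the surviving coefficient of $\khat^+_0$ is exactly $1$; once this identity is in hand, both the reduction to a single lift and the final equivalence are formal.
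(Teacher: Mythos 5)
Your proposal is correct and takes essentially the same route as the paper: both reduce the Legendrian condition to the vanishing of $\khat^\pm_0$ on the preimage of $N$ in $\G$, and both extract the key identity from the expansion \eqref{dgplus} by pairing with $\ri\bg^\pm$, killing the $\bg^\pm$-proportional terms by nullity, killing the $\khat^\pm_j$ terms by frame orthogonality, and checking the surviving coefficient is $\cos^2\phi+\sin^2\phi=1$. The only difference is organizational — you prove $\langle d\bg^\pm,\ri\bg^\pm\rangle=\khat^\pm_0$ upstairs on $\G$ and transfer it via an explicit lift-independence computation (which the paper only states as a remark), whereas the paper pulls \eqref{dgplus} back by an arbitrary lift $f:N\to\G$ of $\bn$ and computes $\langle d\bn,\ri\bn\rangle=f^*\khat^+_0$ directly.
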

Note that the left-hand side of \eqref{nullcond} is a real-valued differential 1-form on $N$, and
that this condition is independent of the choice of lift $\bn$.
In fact, when we identify the projectivized null cone with $S^{2n-1}$, this proposition
implies that the contact $(n-1)$-planes are perpendicular to the fibers of the Hopf fibration.

\begin{proof}
Again, we prove the assertions for $L^+$, the proof for $L^-$ being similar.
Let $\hat{q}=q^+ \circ \Pi: \G \to L^+$.
By definition of the contact structure on $L^+$, $N$ is Legendrian
if and only if $\hat{q}^{-1}(N)$ is an integral of the 1-form $\khat^+_0 = \Pi^* \kappa^+_0$.

Let $f:N\to \G$ be any lift of $\bn:N \to \mathcal V$, so that $\bn = \bg^+ \circ f$.
Then pulling \eqref{dgplus} back by $f$ gives
\begin{multline}\label{uglyv}
d\bn = \dfrac1r \bn f^*(\sin\phi\,  \eta^{2n}-\cos\phi\, \eta^{2n-1})
+ \dfrac{\ri}r \bn f^*(\eta^0 - \sin \phi\, \eta^{2n-1} - \cos\phi\, \eta^{2n}) \\
+(\cos\phi \, \ehat_{2n-1} - \sin\phi\,\ehat_{2n})f^*\khat^+_0
+(\cos\phi\, \ehat_j -\sin \phi\,\ehat_\jbar) f^*\khat^+_j,
\end{multline}
where the vector-valued functions $\ehat_j$ on $\G$ should be understood as being composed with $f$.
By using the formula for $\bg^+$ we obtain
$$\langle d\bn, \ri \bn \rangle = f^* \khat^+_0.$$
Since $f$ is arbitrary, the vanishing of the left-hand side is equivalent
to $\hat{q}^{-1}(N)$ being an integral of $\khat^+_0$.
\end{proof}

\section{Parametric Representations}
Given two Legendrian submanifolds $N_1, N_2$ in $S^{2n-1}$, we 
wish to construct the Hopf hypersurface $M$ of Theorem \ref{constructHopf}.
Let $R$ be as in Theorem \ref{constructHopf}
and let $\hat{R}$ be its inverse image under $\Pi: \G \to \F$.
This is the intersection of the inverse images of $N_1, N_2$ under the maps
$\bg^+$ and $\bg^-$.  The Hopf hypersurface is the image of $\hat{R}$ under $\rho \circ \Pi$, the map that takes a point
in the frame bundle $\G$ to the projectivization of the basepoint $\vz$.

Let $\bn_i: N_i \to \V$ be any lifts of $N_i$ into the null cone.  (We may choose unique lifts
by requiring that the images lie in the intersection of $\V$ with the hyperplane where $z_0=1$.)
We will determine the values of $\bg^+, \bg^-, \ehat_0$ and $\bz$ along $\hat{R}$,
in terms of $\bn_1, \bn_2$ and an extra parameter $\lambda$.
Because $\bg^+$ (resp. $\bg^-$) must lie in the complex line spanned by $\bn^+$
(resp. $\bn^-$), there are nonzero scalars $\mu_1, \mu_2$ such that
\begin{equation}\label{gforms}
\bg^+ = \mu_1 \bn_1, \qquad
\bg^- = \mu_2 \bn_2
\end{equation}
These scalars are not arbitrary, since the intersection of a
fiber of $\hat{q}^+$ with a fiber of $\hat{q}^-$ is a 3-dimensional submanifold of $\G$.
In fact, each such submanifold intersects the fiber of $\rho \circ \Pi$
in a 2-dimensional torus.\footnote{This is the orbit of an $S^1 \times S^1$ action on
 $\G$.  One copy of $S^1$ acts by rotating $\ehat_1$ and $\ehat_2$ in a circle,
while the other acts by multiplying $\vz$ by a unit modulus complex number at the
same time as rotating $\ehat_{2n-1}$ and $\ehat_{2n}$ in a circle.}
Thus, only one additional real parameter, independent of local coordinates on $N_1, N_2$, should remain on
the image of $\hat{R}$ under $\rho \circ \Pi$.

The multiples $\mu_1, \mu_2$ are partially determined by the normalizations of the vectors $\bg^\pm$. For example,
\eqref{biggy} implies that $\bg^+ + \bg^- = 2(\ehat_0 - \sin\phi\, \ehat_{2n-1})$, so that
$\langle \bg^+ + \bg^- , \bg^+ + \bg^- \rangle = -4\cos^2\phi.$
Substituting \eqref{gforms} into equation gives
\begin{equation}\label{profirstcond}
\langle \mu_1 \bn_1, \mu_2 \bn_2\rangle = -2\cos^2 \phi.
\end{equation}
To facilitate factoring out the scalars, we introduce the sesquilinear form
\begin{equation}\label{mydot}
\langle \vz, \vw\rangle_\C = -\overline{z_0} w_0 + \overline{\mu_1} w_1 +\ldots+ \overline{z_n} w_n,
\end{equation}
which satisfies $\langle \vz, \vw \rangle = \realpart \langle \vz, \vw\rangle_\C$.
Define the function $\zeta:N_1 \times N_2 \to \C$ by
\begin{equation}\label{zprop}
\zeta := \langle \bn_1, \bn_2 \rangle_\C.
\end{equation}
Then \eqref{profirstcond} gives
\begin{equation}\label{firstcond}
 \realpart \left(\overline{\mu_1} \mu_2 \zeta \right) =-2\cos^2\phi.
\end{equation}
Since the right-hand side must be nonzero, we restrict
our attention to the open set $U\subset N_1\times N_2$ where $\zeta\ne 0$.

From \eqref{biggy} one can calculate that
\begin{equation}\label{ehatzero}
\ehat_0 = \dfrac12(1+\ri \tan \phi) \bg^+ + \dfrac12(1-\ri \tan \phi) \bg^-
= \dfrac{e^{\ri\phi}\mu_1}{2\cos\phi} \bn_1 +\dfrac{e^{-\ri\phi}\mu_2}{2\cos\phi}\bn_2.
\end{equation}
Then the condition $\langle \ehat_0, \ehat_0 \rangle = -1$ implies that
$$\realpart\left( e^{-2\ri \phi}\overline{\mu_1}\mu_2 \zeta \right)=-2\cos^2\phi.$$
Comparing with \eqref{firstcond}, we deduce that $\overline{\mu_1}\mu_2 \zeta = -2e^{\ri \phi}\cos\phi$.
Hence, the coefficients of $\bn_1, \bn_2$ in \eqref{ehatzero} satisfy
$$\overline{\dfrac{e^{\ri\phi}\mu_1}{2\cos\phi}} \dfrac{e^{-\ri\phi}\mu_2}{2\cos\phi} =
-\dfrac{1-\ri\tan\phi}{2\zeta}.$$
Suppose that there exists a smooth complex-valued function $\tau:N_1 \times N_2 \to \C$ such that
$\tau^2= \tfrac12(1-\ri\tan\phi)/\zeta$.
(Equivalently, assume that a consistent branch can be chosen for $\sqrt{\zeta}$ on $U$.)
Then $e^{\ri\phi} \mu_1 = 2\cos\phi\overline{\lambda \tau}$ and 
$e^{-\ri\phi}\mu_2 = -2\cos\phi \tau/\lambda$ for some nonzero complex parameter $\lambda$,
and so
\begin{equation}\label{ehatvalu}
\ehat_0 = \overline{\lambda\tau} \bn_1 -  \dfrac{\tau}{\lambda} \bn_2.
\end{equation}

Because $\vz \in Q$ is a complex multiple of $\ehat_0$, the image of $\hat R$ under $\rho \circ \Pi:\G \to \CH^n$
coincides with the image of $\ehat_0$ under complex projectivization $\pi$.
Note that this image does not depend on the argument of $\lambda$; thus, the Hopf hypersurface
$M\subset \CH^n$ is given, up to complex multiple, by
the right-hand side of \eqref{ehatvalu}, and $M$ is parametrized by local coordinates on $U$ and $|\lambda|$.
So, we may take $\lambda$ to be real and positive; then using $\vz = -(\ri/r) \ehat_0$ gives the following
mapping from $U \times \R^*$ into $\C^{n+1}$ which is a canonical lift of the Hopf hypersurface in to $Q$:
\begin{equation}\label{dalembert}
\vz = \dfrac1{r}\left( -\ri \lambda\overline{\tau} \bn_1 + \ri \dfrac{\tau}{\lambda} \bn_2\right)
\end{equation}

\begin{remark}
One can show that the $\lambda$-coordinate curves on the lift correspond to the $W$-curves on $M$, i.e.,
the lines of curvature tangent to the
structure vector $W$.  Moreover, \eqref{dalembert} shows that when we use affine
coordinates $w_1=z_1/z_0$, $w_2=z_2/z_0, \ldots$ to map $M$ into the open unit ball in $\C^n$,
the Legendrian submanifolds $N_1, N_2$ appear as limits of $M$ on the ideal boundary $S^{2n-1}$ of $\CH^n$ as
$\lambda$ approaches 0 and $\infty$ along the $W$-curves.  It is also clear from \eqref{dalembert}
that each $W$-curve is contained in the complex line in $\CP^n$ spanned by its endpoints on the boundary.
The intersection of $\CH^n$ with this line is a hyperbolic disc in which the $W$-curve has constant curvature $\alpha$.
\end{remark}

Note that for {\em arbitrary} given Legendrian submanifolds $N_1, N_2$, the composition of the mapping
\eqref{dalembert} with projection into $\CH^n$ is not automatically of rank $2n-1$.
We end this section with a discussion of the case $n=2$, in which
we work out the rank condition explicity.

Our starting ingredients are
contact curves $\CC_1, \CC_2$ in $S^3$.  Suppose a curve $\CC$ in $S^3 \subset \C^2$ is
defined parametrically by complex coordinates
$w_1(t) = e^{\ri \beta(t)}\cos \mu(t) $ and $w_2 = e^{\ri \gamma(t)}\sin \mu(t) $,
and $\bn(t) = [1, w_1(t), w_2(t)]$ is the lift into the null cone $\V \subset \C^3$.
Applying the condition from Prop. \ref{contactcond} to this lift, we find that $\CC$ is a contact curve
if and only if
\begin{equation}\label{abccond}
\beta'  \cos^2 \mu + \gamma'  \sin^2 \mu = 0.
\end{equation}
This is an underdetermined ODE which, for example, may be solved for $\beta(t)$ given
functions $\gamma(t)$ and $\mu(t)$.

Let $\CC_1$ and $\CC_2$ be contact curves defined respectively by
solutions $\mu_1(s),\beta_1(s),\gamma_1(s)$ and $\mu_2(t), \beta_2(t), \gamma_2(t)$
of \eqref{abccond}.  The lifts are
$$\bn_1(s) = \begin{bmatrix}1 \\ e^{\ri \beta_1}\cos \mu_1 \\ e^{\ri \gamma_1} \sin \mu_1\end{bmatrix}, \qquad
\bn_2(t) =  \begin{bmatrix}1 \\ e^{\ri \beta_2}\cos \mu_2
\\ e^{\ri \gamma_2} \sin \mu_2\end{bmatrix}.
$$
and we compute
$$\zeta(s,t) = -1+ e^{\ri (\beta_2 - \beta_1)} \cos \mu_1 \cos \mu_2
+e^{\ri (\gamma_2 -\gamma_1)}\sin\mu_1 \sin\mu_2.$$

\begin{prop}\label{geneprop} Let $u = \log \lambda$ for $\lambda$ real and positive.  Then the map taking
$(s,t,u)$ to the projection of the right-hand side of \eqref{dalembert} into $\CH^2$ has rank 3 at points
where $\zeta \ne 0$ and
$$\impart\left( \dfrac{\di^2 \zeta}{\di s \di t} - \dfrac{\sec^2 \phi}{\zeta} \dfrac{\di \zeta}{\di s}\dfrac{\di \zeta}{\di t}
\right) \ne 0.$$
\end{prop}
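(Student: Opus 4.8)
The plan is to reduce the rank statement to a single scalar inequality by working upstairs in $\C^3$ with the Hermitian form $\langle\,\cdot\,,\cdot\,\rangle_\C$ of \eqref{mydot}. By \eqref{ehatvalu} the map in question sends $(s,t,u)$, with $\lambda=e^u$, to $[\ehat_0]\in\CH^2$, where $\ehat_0=\lambda\bar\tau\,\bn_1-(\tau/\lambda)\bn_2$. Since the Hopf projection $\pi:Q\to\CH^2$ has vertical line spanned by $\ri\,\ehat_0$ and is complex–linear on horizontal vectors, the map has rank $3$ at a point exactly when $\di_s\ehat_0,\ \di_t\ehat_0,\ \di_u\ehat_0$ and $\ri\,\ehat_0$ are linearly independent over $\R$ in $\C^3$; equivalently, when the Hermitian orthogonal projections of $\di_s\ehat_0,\di_t\ehat_0,\di_u\ehat_0$ onto $T_{[\ehat_0]}\CH^2=\ehat_0^{\perp_\C}$ are $\R$–independent.

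First I would record the algebraic consequences of the Legendrian hypothesis via Proposition \ref{contactcond}. Because $\bn_1,\bn_2$ are null and contact, $\langle\bn_i,\bn_i\rangle_\C=0$ and $\langle\di\bn_i,\bn_i\rangle_\C=0$, while from \eqref{zprop} one gets $\di_s\zeta=\langle\di_s\bn_1,\bn_2\rangle_\C$, $\di_t\zeta=\langle\bn_1,\di_t\bn_2\rangle_\C$, and $\di_s\di_t\zeta=\langle\di_s\bn_1,\di_t\bn_2\rangle_\C$. Differentiating \eqref{ehatvalu} and using $2\tau\,\di_s\tau=-\tau^2\,\di_s\zeta/\zeta$ (and its conjugate), I would express $\di_u\ehat_0=\lambda\bar\tau\,\bn_1+(\tau/\lambda)\bn_2$, and write $\di_s\ehat_0$ and $\di_t\ehat_0$ as combinations of $\bn_1,\bn_2,\di_s\bn_1$ and of $\bn_1,\bn_2,\di_t\bn_2$ respectively, with coefficients built from $\tau$ and the logarithmic derivatives of $\zeta$.

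The key step is to split $\C^3=\Lambda\oplus\Lambda^{\perp_\C}$, where $\Lambda=\mathrm{span}_\C\{\bn_1,\bn_2\}$ is the complex line joining the boundary endpoints of the $W$–curve (cf. the Remark). Both $\ehat_0$ and $\di_u\ehat_0$ lie in $\Lambda$, so $\di_u\ehat_0$ only produces the tangent to the hyperbolic disc, while the genuinely transverse parts of $\di_s\ehat_0,\di_t\ehat_0$ are the $\Lambda^{\perp_\C}$–components of $\lambda\bar\tau\,\di_s\bn_1$ and $(\tau/\lambda)\di_t\bn_2$. Projecting $\langle\di_s\bn_1,\di_t\bn_2\rangle_\C$ onto $\Lambda$ and its complement and using the pairings above yields the identity $\langle(\di_s\bn_1)^{\perp},(\di_t\bn_2)^{\perp}\rangle_\C=\di_s\di_t\zeta-\tfrac1\zeta\,\di_s\zeta\,\di_t\zeta$. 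Choosing $\nu$ to span $\Lambda^{\perp_\C}$ and setting $W_1=\langle\nu,\di_s\bn_1\rangle_\C$, $W_2=\langle\nu,\di_t\bn_2\rangle_\C$, this reads $\overline{W_1}W_2=\langle\nu,\nu\rangle_\C\big(\di_s\di_t\zeta-\di_s\zeta\,\di_t\zeta/\zeta\big)$. I would then check that the $\Lambda$–component of $\di_s\ehat_0$ (resp. $\di_t\ehat_0$) along $\di_u\ehat_0$ is \emph{real} — it equals $\tfrac14(\di_s\zeta/\zeta+\overline{\di_s\zeta/\zeta})$ — which is exactly what collapses the $\R$–independence of the three projected velocities to the single requirement that the $\nu$–coefficients of $\di_s\ehat_0$ and $\di_t\ehat_0$ be $\R$–independent, i.e. $\impart\!\big(\tau^2\,\overline{W_1}W_2\big)\neq0$.

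Finally I would substitute the normalization $\tau^2\zeta=\tfrac12(1-\ri\tan\phi)$ obtained just before \eqref{ehatvalu}, together with the displayed identity for $\overline{W_1}W_2$, discarding the positive real factor $\langle\nu,\nu\rangle_\C$. This turns the criterion into an inequality of the form $\impart(\,\cdot\,)\neq0$ on the set $\zeta\neq0$, whose vanishing locus is the rank drop. The main obstacle is precisely this last reduction: one must track the $\phi$–dependent factor $\tau^2$ through the substitution so that the coefficient $\sec^2\phi$ and the single surviving term $\di_s\di_t\zeta$ of the statement appear in the stated combination $\di_s\di_t\zeta-\tfrac{\sec^2\phi}{\zeta}\,\di_s\zeta\,\di_t\zeta$; the geometric content — that rank can only fail where the two transverse directions $\di_s\bn_1,\di_t\bn_2$ become $\R$–proportional modulo $\Lambda$ — is already captured by the transverse–pairing identity of the previous paragraph.
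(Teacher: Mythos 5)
Your strategy coincides with the paper's own proof: differentiate \eqref{ehatvalu}, split $\C^3$ along $\Lambda=\operatorname{span}_\C\{\bn_1,\bn_2\}=\operatorname{span}_\C\{\ehat_0,\di\ehat_0/\di u\}$, reduce rank $3$ to $\R$-independence of the transverse parts of $\di_s\ehat_0$ and $\di_t\ehat_0$, and test this via the imaginary part of a Hermitian pairing. Moreover your intermediate identities are all correct: the true orthogonal projection is $(\di_s\bn_1)^{\perp}=\di_s\bn_1-(\di_s\overline{\zeta}/\overline{\zeta})\,\bn_1$, the transverse pairing equals $\di_s\di_t\zeta-\di_s\zeta\,\di_t\zeta/\zeta$, the $\di\ehat_0/\di u$-coefficients of the $\Lambda$-parts are real (so your criterion is in fact necessary as well as sufficient), and retaining the coefficients $\lambda\overline{\tau}$ and $-\tau/\lambda$ gives the test $\impart\bigl(\tau^2(\di_s\di_t\zeta-\di_s\zeta\,\di_t\zeta/\zeta)\bigr)\neq0$, equivalently $\impart\bigl((1-\ri\tan\phi)\,\di_s\di_t\log\zeta\bigr)\neq0$.

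However, the step you flagged as the ``main obstacle'' is a genuine obstruction, not bookkeeping: your criterion cannot be rewritten in the stated form, because the two expressions have different zero sets. Comparing with the paper's proof shows exactly where the divergence occurs. The paper uses the formula $\sfP(\di_s\bn_1)=\di_s\bn_1-2\overline{\tau}^2\,\di_s\overline{\zeta}\,\bn_1$, which is the orthogonal-projection formula for a Hermitian-orthogonal basis; but $\langle\ehat_0,\di\ehat_0/\di u\rangle_\C=\tau^2\zeta-\overline{\tau^2\zeta}=-\ri\tan\phi\neq0$ (the two vectors are orthogonal only for the real form), and indeed $\langle\bn_2,\sfP(\di_s\bn_1)\rangle_\C=-\ri\tan\phi\,\di_s\overline{\zeta}$, so the paper's $\sfP$ does not actually map into $\Lambda^{\perp}$. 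Second, the paper tests the pairing of $\sfP(\di_s\bn_1)$ with $\sfP(\di_t\bn_2)$, whereas independence concerns the actual velocities $\lambda\overline{\tau}\,\sfP(\di_s\bn_1)$ and $-(\tau/\lambda)\,\sfP(\di_t\bn_2)$; this drops the non-real factor $-\tau^2$, which you correctly keep.

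Two checks confirm that your version, not the printed one, is right. Rescaling a lift $\bn_1\mapsto c\,\bn_1$ replaces $\zeta$ by $\overline{c}\,\zeta$ but changes the map \eqref{dalembert} only by a constant phase and a shift of $u$, so a valid rank test must be invariant under this; yours is (it depends only on $\di_s\di_t\log\zeta$), while the stated expression is multiplied by $\overline{c}$. Concretely, take $\tan\phi=1$, $\bn_1(s)=(1,e^{\ri s}/\sqrt2,\,e^{-\ri s}/\sqrt2)$ and $\bn_2(t)=(1,-e^{\ri t}/\sqrt2,\,e^{-\ri t}/\sqrt2)$, so $\zeta=-1-\ri\sin(t-s)$. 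Where $t-s=\pi/2$ one has $\di_s\zeta=\di_t\zeta=0$ and $\di_s\di_t\zeta=-\ri$, so the stated criterion is satisfied; yet there $\tau^2=\ri/2$ and (since $\di_s\tau=\di_t\tau=0$ at such a point) $\di_s\ehat_0=\lambda\overline{\tau}(\ri/\sqrt2)(0,1,-1)$, $\di_t\ehat_0=-(\tau/\lambda)(1/\sqrt2)(0,1,-1)$, whence $\di_t\ehat_0=-\lambda^{-2}\di_s\ehat_0$ and the rank is $2$. So you should carry your argument to completion and state the conclusion with the corrected criterion $\impart\bigl((1-\ri\tan\phi)\,\di_s\di_t\log\zeta\bigr)\neq0$ (which your realness observation upgrades to an equivalence); the coefficient $\sec^2\phi$ in the proposition, and the projection step in the paper's proof that produces it, are in error.
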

Twhile the assumption that $\bn_1(s)$ satisfies the contact condition of Prop. \eqref{contactcond}
implies that
$$\left \langle \dfrac{\di \bn_1}{\di s}, \bn_1 \right\rangle_\C = 0.$$

\begin{proof}[Proof of Prop. \ref{geneprop}]  It is sufficient to work
with the projection of the right-hand side of \eqref{ehatvalu}, which differs from $\vz$ by a complex constant.
Differentiating \eqref{ehatvalu} gives
\begin{equation}\label{dezu}
\di \ehat_0/\di u = \lambda\overline{\tau} \bn_1+ (\tau/\lambda) \bn_2.
\end{equation}
It follows from \eqref{zprop} that $\langle \di \ehat_0/\di u, \di \ehat_0/\di u \rangle =1$.  Thus,
$\di \ehat_0/\di u$ is a spacelike vector in $\C^3$, and is linearly independent of
complex multiples of the timelike vector $\ehat_0$.

Next, we compute
$$
\dfrac{\di \ehat_0}{\di s} = \lambda\overline{\tau}_s \bn_1+ \lambda\overline{\tau} \dfrac{d\bn_1}{d s}
-  \dfrac{\tau_s }{\lambda} \bn_2
,\qquad
\dfrac{\di \ehat_0}{\di t} =
\lambda\overline{\tau}_t \bn_1 -  \dfrac{\tau_t }{\lambda} \bn_2 - \dfrac{\tau}{\lambda}\dfrac{d \bn_2}{d t}.
$$
Let $\sfP$ denote projection onto the orthogonal complement of the complex
span of $\ehat_0$ and $\di \ehat_0/\di_u$.  (Note that $\bn_1$ and $\bn_2$ lie in this span,
and the map $\sfP$ is $\C$-linear.)
Then $\sfP(\di \ehat_0/\di s) = \lambda \overline{\tau} \sfP(d \bn_1/d s)$
and $\sfP(\di\ehat_0/\di t) =-(\tau/\lambda) \sfP(d \bn_2/d t)$.
We will compute conditions under which these projections are
linearly independent over $\R$.

The fact that $\bn_1(s)$ satisfies the contact condition of Prop. \eqref{contactcond}
implies
$$\left \langle \dfrac{d \bn_1}{d s}, \bn_1 \right\rangle_\C = 0.$$
Using this and \eqref{ehatvalu}
we obtain
$$
\left\langle \dfrac{d \bn_1}{d s},\ehat_0\right\rangle_\C =-\dfrac{\tau}{\lambda} \dfrac{\di \zeta}{\di s},
\qquad
\left\langle\dfrac{d \bn_1}{d s}, \dfrac{\di \ehat_0}{\di u}\right\rangle_\C =
 \dfrac{\tau}{\lambda}
 \dfrac{\di \zeta}{\di s}.
$$
Hence
$$\sfP\left(\dfrac{d \bn_1}{d s}\right)
= \dfrac{d \bn_1}{d s}
-\dfrac{\overline{\tau}}{\lambda}
 \dfrac{\di \overline{\zeta}}{\di s}
\left( \ehat_0 + \dfrac{\di \ehat_0}{\di u}\right)
= \dfrac{d \bn_1}{d s}
-2\overline\tau^2 \dfrac{\di \overline{\zeta}}{\di s} \bn_1.
$$
Similarly, we compute
$$\sfP\left(\dfrac{d \bn_2}{d t}\right)
= \dfrac{d \bn_2}{d t}
-2\tau^2 \dfrac{\di \zeta}{\di t} \bn_2.
$$
Because the range of $\sfP$ has real dimension 2, 
these two vectors will be linearly independent if and only if
the imaginary part of their complex inner product (in the sense of \eqref{mydot})
is nonzero.  By using the inner product values given by
differentiating \eqref{zprop}, we obtain
$$\left\langle
\dfrac{\di \bn_1}{\di s}
-2\overline{\tau}^2 \dfrac{\di \overline\zeta}{\di s} \bn_1,
\dfrac{\di \bn_2}{\di t}
-2\tau^2 \dfrac{\di \zeta}{\di t} \bn_2
\right\rangle_\C
= \dfrac{\di^2 \zeta}{\di s\di t} - \dfrac{\sec^2 \phi}{\zeta}
\dfrac{\di \zeta}{\di s}\dfrac{\di \zeta}{\di t}.$$
\end{proof}

\section*{Conclusion}
As with the work of Aledo {\it et al} \cite{AGM} on flat surfaces in $S^3$,
one could use the d'Alembert formula to study the Cauchy problem for
Hopf hypersurfaces.  (In this direction, Ryan and I \cite{IR3} have shown that
a Hopf hypersurface may be constructed which contains any given curve with zero holomorphic torsion
in $\CP^2$ or $\CH^2$ as a principal curve.)
Another interesting application of this technique would be the study of singularities
of Hopf hypersurfaces, for example comparing the degenerations of hypersurfaces with small $\alpha$
with those of hypersurfaces with large $\alpha$.

\bigskip
I thank Pat Ryan and Robert Bryant for helpful discussions and encouragement.  I
also thank the referees of earlier versions of this article for their comments and suggestions.

\end{document}